\documentclass[11pt,a4paper]{article}
\usepackage[margin=25mm]{geometry}
\usepackage[T1]{fontenc}
\usepackage{lmodern}
\usepackage{amsmath,amssymb,amsthm,mathtools}
\usepackage{enumitem,booktabs,needspace}
\usepackage[hidelinks]{hyperref}
\allowdisplaybreaks[1]
\setlist[enumerate]{itemsep=3pt,topsep=5pt}
\newtheorem{theorem}{Theorem}[section]
\newtheorem{proposition}[theorem]{Proposition}
\newtheorem{lemma}[theorem]{Lemma}
\newtheorem{corollary}[theorem]{Corollary}
\theoremstyle{definition}
\newtheorem{definition}[theorem]{Definition}

\DeclareMathOperator{\ex}{ex}
\newcommand{\tht}{\theta}
\newcommand{\CC}{\mathcal C}
\newcommand{\OO}{\mathcal O}
\newcommand{\QQ}{\mathbb Q}
\newcommand{\RR}{\mathbb R}
\newcommand{\sG}{\dot G}
\newcommand{\sF}{\dot F}
\newcommand{\sR}{\dot R}
\newcommand{\sT}{\dot T}

\title{Tur\'{a}n coefficients of signed graphs}
\author{{Junjie Wang$^{a}$, Yaoping Hou$^{a,}$\footnote{Corresponding author.}\setcounter{footnote}{-1}\footnote{\emph{E-mail address:} yphou@hunnu.edu.cn}}\\[2mm]
	\small $^{a}$College of Mathematics and Statistics, Hunan Normal University,\\
	\small Changsha,  Hunan, 410081, China\\[2mm]
}
\date{}
\begin{document}
\maketitle

\begin{abstract}
We use spectral methods to determine Tur\'an coefficients of signed graphs under containment that preserves negative edges and allows positive edges to be realized by either sign. For a fixed forbidden signed graph $\sF$ with at least one edge, the coefficient $\tht(\sF)$ is the supremum of the weights $\rho\ge1$ for which every $\sF$-free signed graph $\sG$ of order $n$ satisfies $e^+(\sG)+\rho e^-(\sG)\le\binom n2+o(n^2)$. We prove a variational characterization of the weighted extremal density and show that $\tht(\sF)=1+1/L(\sF)$, where $L(\sF)$ is the supremum of the adjacency spectral radii of $\sF$-admissible graphs. The coefficient is infinite exactly when the underlying graph $F$ is bipartite, and equals one exactly when $F$ has no independent set meeting every negative edge. When $\chi(F)=3$, the possible coefficients are exactly $1$, $3/2$, $1+1/\sqrt3$, $(1+\sqrt5)/2$, $1+1/\sqrt2$, and $2$, each of which is realized by a signed graph. In this case, if $\tht(\sF)>1$, then $L(\sF)$ is attained by an admissible graph on at most five vertices, and the order bound is sharp. More generally, when some path is inadmissible, we obtain explicit order bounds for connected admissible graphs with bounded maximum degree. We also derive restrictions on the algebraic degree and rational values of $\tht(\sF)$.
\end{abstract}

\noindent\textbf{Keywords.} Signed graph; Tur\'an problem; spectral radius; weighted edge density.

\section{Introduction}\label{sec:introduction}

A signed graph $\sG=(G,\sigma_G)$ consists of an ordinary graph $G$ and a signature $\sigma_G:E(G)\to\{+1,-1\}$. Write $e^+(\sG)$ and $e^-(\sG)$ for its numbers of positive and negative edges.

For an ordinary graph $F$ with $\chi(F)=r\ge3$, the Erd\H{o}s--Stone theorem~\cite{ErdosStone} gives
\[
 \ex(n,F)=\left(1-\frac1{r-1}+o(1)\right)\binom n2.
\]
Here the forbidden signed graph $\sF$ has at least one edge. We study containment in which every negative edge of $\sF$ must be realized by a negative edge, whereas a positive edge may be realized by either sign. Equivalently, copies may be obtained by deleting vertices and edges and changing negative edges to positive. If $\sF$ has a negative edge, the all-positive complete graph is $\sF$-free, so total edge density alone gives no nontrivial bound. We instead consider the supremum $\tht(\sF)$ of the fixed weights $\rho\ge1$ for which
\[
 e^+(\sG)+\rho e^-(\sG)\le\binom n2+o(n^2)
\]
holds uniformly over all $n$-vertex $\sF$-free signed graphs. Since the left-hand side is $e(G)+(\rho-1)e^-(\sG)$, this inequality relates negative edges to missing edges.

A simple interpretation uses two thresholds for connection strength, motivated by the distinction between strong and weak ties~\cite{Granovetter,Onnela}. Given symmetric strengths $w_{uv}\ge0$ and $0<a<b$, label an edge positive when $a\le w_{uv}<b$ and negative when $w_{uv}\ge b$. A copy then requires strength at least $a$ on every edge and at least $b$ on its negative edges. Equivalently, regard $E(G)$ and $M=E^-(\sG)$ as two layers, with $M\subseteq E(G)$~\cite{Kivela}. The objective is $|E(G)|+(\rho-1)|M|$. 

\subsection{Related work}
Balance and switching are central to signed graph theory~\cite{Harary,Zaslavsky}. Switching at a vertex set $X$ reverses the signs of edges with exactly one endpoint in $X$ and preserves the sign of every cycle. Extremal problems for unbalanced signed graphs include spectral radius and index~\cite{BrunettiStanic}, forbidden negative triangles~\cite{WangHouLi}, negative odd cycles~\cite{WangHouHuang}, and unbalanced complete subgraphs~\cite{ChenYuan,WangK5,XiongHou}. We consider containment with fixed signatures, in which negative edges are preserved and positive edges may be represented by edges of either sign, with no restriction on the balance of the signed graph; see Definition~\ref{def:containment} for the precise definition.

Mani and Yu's Tur\'an coefficients for mixed graphs~\cite{MY} provide a related weighted framework. Orienting the negative edges of $\sF$ in all possible ways and leaving its positive edges undirected gives a finite family $\OO(\sF)$. For any fixed orientation of the negative edges of $\sG$, the resulting mixed graph avoids $\OO(\sF)$ if and only if $\sG$ is $\sF$-free. When $\sF$ has a negative edge and $1<\tht(\sF)<\infty$, their finite-family variational theory~\cite[Theorem~4.13 and Section~5.3]{MY}, together with the reduction in Section~\ref{sec:componentbounds}, gives finite attainment and algebraicity. The all-positive case follows directly from the chromatic number. We give an independent proof of the spectral characterization and use it to obtain structural classifications and explicit bounds.

\subsection{Main results}\label{subsec:mainresults}
For an ordinary graph $S$, let $\sT(S)$ be the complete signed graph whose negative edges are precisely $E(S)$. A blow-up replaces vertices by independent sets and retains the prescribed signs between these sets. Let $\mathcal A(\sF)$ be the class of ordinary graphs $S$ with at least one vertex for which every blow-up of $\sT(S)$ is $\sF$-free. Such graphs are called $\sF$-admissible. Write
\[
 L(\sF)=\sup_{S\in\mathcal A(\sF)}\lambda_1(S),
\]
where $\lambda_1(S)$ is the adjacency spectral radius of $S$. The limiting maximum of $(e^+(\sG)+\rho e^-(\sG))/\binom n2$ over $n$-vertex $\sF$-free signed graphs is denoted by $h_{\sF}(\rho)$.

For a graph $S$ of order $s$, define
\[
 \Delta_{s-1}=\left\{x\in\RR^s:x_i\ge0,\ \sum_{i=1}^{s}x_i=1\right\},
 \qquad M_\rho(S)=J-I+(\rho-1)A(S),
\]
\[
 g_\rho(S)=\max_{x\in\Delta_{s-1}}x^{\mathsf T}M_\rho(S)x,
\]
where $A(S)$ is the adjacency matrix, $J$ is the all-ones matrix, and $I$ is the identity matrix of order $s$. Our first result characterizes $\tht(\sF)$ through the adjacency spectral radii of admissible graphs.

\begin{theorem}\label{thm:spectral}
For every signed graph $\sF$ with at least one edge and every fixed $\rho\ge1$,
\[
 h_{\sF}(\rho)=\sup_{S\in\mathcal A(\sF)}g_\rho(S).
\]
Consequently,
\[
 \tht(\sF)=1+\frac1{L(\sF)},
\]
with the conventions $1/0=\infty$ and $1/\infty=0$.
\end{theorem}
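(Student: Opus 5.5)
The plan has two halves: the identity $h_{\sF}(\rho)=\sup_{S\in\mathcal A(\sF)} g_\rho(S)$, proved by matching lower and upper bounds, and then the formula for $\tht(\sF)$ read off from it.

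\emph{Lower bound.} Fix $S\in\mathcal A(\sF)$ and $x\in\Delta_{s-1}$. Take the blow-up of $\sT(S)$ with parts of sizes $\lfloor x_i n\rfloor$. It is $\sF$-free by admissibility. Pairs between distinct parts are edges, with weight $\rho$ exactly when $ij\in E(S)$. So the weighted edge count is $\tfrac12 n^2 x^{\mathsf T}M_\rho(S)x+o(n^2)$, and $h_{\sF}(\rho)\ge g_\rho(S)$.

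\emph{Upper bound.} This is the main obstacle, and I would attack it with a two-colour regularity argument. Let $\sG$ be $\sF$-free of order $n$ and apply the multicolour regularity lemma to the pair $(E^+,E^-)$ with parameter $\varepsilon$, giving $k$ clusters. Build a reduced structure on the clusters:
\begin{itemize}
\item a pair $ij$ is \emph{present} if it is regular and $d^+ + d^- \ge \delta$;
\item a present pair is \emph{negative} if $d^-\ge\delta$.
\end{itemize}
By the counting lemma, any blow-up of a signed complete graph living on a set of clusters that is pairwise present embeds, with negative edges realized by negative edges. Since $\sG$ is $\sF$-free, every clique $Q$ of present pairs in the reduced graph gives a graph $S_Q$, namely the negative pairs inside $Q$, that lies in $\mathcal A(\sF)$.

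The weighted count of $\sG$ is at most
\[
\sum_{ij \text{ present}} |V_i||V_j|\bigl(1+(\rho-1)d^-_{ij}\bigr) + O(\rho(\varepsilon+\delta))n^2,
\]
where non-negative present pairs contribute only $(\rho-1)\delta$ extra. This bounds the count by $\tfrac{n^2}{k^2}\, y^{\mathsf T}\bigl(A(R)+(\rho-1)A(N)\bigr)y$ up to error terms, where $R$ is the reduced graph, $N\subseteq R$ the negative pairs, and $y$ uniform.

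Now maximize the quadratic form $q(y)=\sum_{ij\in R}y_iy_j(1+(\rho-1)[ij\in N])$ over the simplex. The standard Motzkin--Straus support-reduction step applies: if $y_i,y_j>0$ with $ij\notin R$, moving weight from one to the other does not decrease $q$, because the form is linear along that direction (the coefficient of $y_iy_j$ is $0$). Hence some maximizer is supported on a clique $Q$ of $R$. On $Q$ the form equals $y^{\mathsf T}M_\rho(S_Q)y$ with $S_Q\in\mathcal A(\sF)$, so the bound is at most $\sup_S g_\rho(S)$. Letting $\varepsilon,\delta\to0$ gives $h_{\sF}(\rho)\le\sup_S g_\rho(S)$. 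The care needed here is in the counting: $\sF$ has fixed order, so blow-up copies are found inside $|V(F)|$ clusters with bounded part sizes, and admissibility only has to be invoked for finite blow-ups.

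\emph{Consequence for $\tht(\sF)$.} We have $\tht(\sF)=\sup\{\rho : h_{\sF}(\rho)\le1\}$; I would check that $h_{\sF}$ is the normalized limit so that $\le\binom n2+o(n^2)$ corresponds to $h\le1$. For $x\in\Delta_{s-1}$,
\[
x^{\mathsf T}M_\rho x = 1-\|x\|^2+(\rho-1)x^{\mathsf T}Ax .
\]
\begin{itemize}
\item If $(\rho-1)\lambda_1(S)\le1$, then $x^{\mathsf T}Ax\le\lambda_1\|x\|^2$ gives $g_\rho(S)\le1$.
\item If $(\rho-1)\lambda_1(S)>1$, take $x=\varepsilon v+(1-\varepsilon\textstyle\sum v)e_1$-type perturbations, or more simply $x$ a small multiple of the Perron vector $v$ added to a spread-out distribution on extra isolated-in-$S$ vertices. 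This is possible because admissibility is closed under adding isolated vertices, since those correspond to all-positive parts that can merge. Actually the cleaner choice is $x=t v/\sum v$ on $S$ together with mass $1-t$ spread uniformly over $m\to\infty$ extra isolated vertices. The value is then $1-t^2\|v\|^2/(\sum v)^2+(\rho-1)\lambda_1 t^2\|v\|^2/(\sum v)^2-o(1)>1$.
\end{itemize}
Closure under adding isolated vertices needs justification: a blow-up of $\sT(S\cup K_1^{m})$ is contained in a blow-up of $\sT(S)$ plus an all-positive clique of parts. I would instead use the blow-up invariance by duplicating a vertex of $S$ with a positive edge. If that fails, I would argue that $S\cup\overline{K_m}$ is admissible because it is the all-positive blow-up part. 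This closure is the step I am least sure of.

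Granting it, $h_{\sF}(\rho)>1$ iff $(\rho-1)L(\sF)>1$, which yields $\tht(\sF)=1+1/L(\sF)$ together with the stated conventions.
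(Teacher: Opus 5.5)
Your argument for the identity $h_{\sF}(\rho)=\sup_{S\in\mathcal A(\sF)}g_\rho(S)$ follows the paper's route. The lower bound uses blow-ups of $\sT(S)$. The upper bound uses multicolour regularity and a reduced signed graph (negative where $d^-\ge\delta$, otherwise present where $d^++d^-\ge\delta$). A key-lemma embedding then shows that every clique support gives an admissible graph, and Motzkin--Straus compression reduces to a clique. Your normalization is off by a factor: with $x$ the uniform probability vector on $[k]$, the main term is $\tfrac{n^2}{2}x^{\mathsf T}W_\rho(\sR)x$. This is harmless.

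The gap is in deducing $h_{\sF}(\rho)>1$ from $(\rho-1)\lambda_1(S)>1$. Your construction puts mass on $m$ extra isolated vertices, so it needs $S\cup\overline{K_m}\in\mathcal A(\sF)$. That is false in general, as the all-positive triangle shows:
\begin{itemize}
\item Since every pair of $\sT(S)$ is an edge, $\sF\to_w\sT(S)$ holds exactly when $|V(S)|\ge3$. So $K_2$ is admissible but $K_2\cup K_1$ is not.
\item Duplicating a vertex of $K_2$ gives $P_3$, which is again inadmissible.
\item In a blow-up of $\sT(S\cup\overline{K_m})$, the added parts are joined positively to all other parts. Such a blow-up is therefore not a blow-up of $\sT(S)$, and it has room for more colour classes.
\end{itemize}
So the step you ``grant'' cannot be proved.

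It is also unnecessary. Take $t=1$ in your own formula, i.e.\ $x=v/\sum_i v_i$ with $v$ a nonnegative Perron vector of $A(S)$. Then $x\in\Delta_{s-1}$ and $x^{\mathsf T}A(S)x=\lambda_1(S)\|x\|_2^2$ exactly. Hence $x^{\mathsf T}M_\rho(S)x=1+\bigl((\rho-1)\lambda_1(S)-1\bigr)\|x\|_2^2>1$, since $x\ne0$. This is the paper's argument. With it, your equivalence ``$h_{\sF}(\rho)>1$ iff $(\rho-1)L(\sF)>1$'' gives $\tht(\sF)=1+1/L(\sF)$ with the stated conventions. When $L(\sF)=\infty$, treat $\rho=1$ separately using $h_{\sF}(1)\le1$.
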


Let $\CC(\sF)$ be the family of independent sets in $F$ meeting every negative edge of $\sF$. When this family is nonempty, define
\[
 k(\sF)=\min_{C\in\CC(\sF)}\chi(F-C).
\]
The next result identifies the extreme cases and bounds all other coefficients.

\begin{theorem}\label{thm:structure}
For every signed graph $\sF$ with at least one edge:
\begin{enumerate}[label=\textup{(\roman*)}]
\item $\tht(\sF)=\infty$ if and only if $F$ is bipartite.
\item $\tht(\sF)=1$ if and only if $\CC(\sF)=\varnothing$.
\item If $r=\chi(F)\ge3$ and $\CC(\sF)\ne\varnothing$, then $r-1 \le k(\sF)\le r$, $r-2\le L(\sF)\le k(\sF)-1$, and hence
\[
 1+\frac1{k(\sF)-1}\le\tht(\sF)\le1+\frac1{r-2}.
\]
\end{enumerate}
In particular, if $k(\sF)=r-1$, then $\tht(\sF)=(r-1)/(r-2)$.
\end{theorem}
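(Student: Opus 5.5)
Everything reduces to Theorem~\ref{thm:spectral}: since $\tht(\sF)=1+1/L(\sF)$, each claim becomes a statement about which graphs $S$ are $\sF$-admissible and how large $\lambda_1(S)$ can be. The first step is a concrete admissibility criterion. A blow-up of $\sT(S)$ contains $\sF$ if and only if there is a map $\phi:V(F)\to V(S)$ with the following properties: each fibre $\phi^{-1}(v)$ is independent in $F$, so adjacent vertices of $F$ go to distinct vertices of $S$; and every negative edge of $\sF$ is mapped onto an edge of $S$. Positive edges impose no further condition, because every pair of distinct vertices of $\sT(S)$ is joined by an edge of some sign. So $S$ is admissible exactly when no such ``signed homomorphism'' exists.

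\emph{Part (i).} Suppose $F$ is bipartite. Take $S=K_2$ and any $m\ge 1$. A large blow-up of $\sT(S_m)$ with $S_m=K_{m,m}$ should not be forced to be admissible, so we argue via the Kővári--Sós--Turán theorem instead. For $\rho$ fixed, an $F$-free graph has $o(n^2)$ edges, so $h_{\sF}(\rho)\le 1$ for every $\rho$, giving $\tht=\infty$ directly from the definition (equivalently $L(\sF)=0$).

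Conversely, suppose $\chi(F)\ge3$. Then $K_2$ (a single negative edge) is admissible: any $\phi$ into $K_2$ with independent fibres would properly $2$-colour $F$. Hence $L(\sF)\ge1$ and $\tht<\infty$.

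\emph{Part (ii).} Suppose $\CC(\sF)=\varnothing$. Take any admissible $S$ with $\lambda_1(S)>0$, so $S$ has an edge $uv$. Send one fixed colour class $C$ of a proper colouring of $F$ to $u$ and the rest to vertices of a clique? That fails, so use the cleaner route. If $C\in\CC(\sF)$ exists, let $S$ be a star $K_{1,t}$ with $t=\chi(F-C)-1$. Map $C$ to the centre and $F-C$ properly onto the leaves, except that one needs $t\ge$ the number of colours, so take $t=\chi(F-C)$. Reversing, the criterion says: if $S$ is admissible with $\lambda_1(S)>0$, we want a contradiction when $\CC=\varnothing$. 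I expect this to be the main obstacle. My plan is to show that $S=K_1$ together with graphs having an edge: for the forward direction, if $\CC=\varnothing$ then every $S$ with an edge is still admissible, and its spectral radius is unbounded (take $K_t$), so $L=\infty$ and $\tht=1$.

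The key argument for $\CC=\varnothing$ runs as follows. A signed homomorphism into $S$ sends every negative edge to an edge of $S$. So if $S=K_{1,t}$ the centre's preimage meets every negative edge, giving a member of $\CC$; more generally I will show $K_t$ itself is admissible whenever $\CC=\varnothing$. That case needs care, since a homomorphism into $K_t$ with $t\ge\chi(F)$ does exist. So instead use stars $K_{1,t}$: any homomorphism into $K_{1,t}$ has the centre's preimage in $\CC$, so stars are admissible, and $\lambda_1(K_{1,t})=\sqrt t\to\infty$.

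For the converse, if $C\in\CC$ exists, then $K_{1,t}$ with $t\ge\chi(F-C)$ is not admissible. We then need $L<\infty$, which comes out of part (iii).

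\emph{Part (iii).} For the lower bound, take $S=K_{r-1}$, whose edges are all negative. A homomorphism into it would properly $(r-1)$-colour $F$, which is impossible, so $L\ge r-2$. For $k\le r$: if $C\in\CC$ then $\chi(F-C)\ge r-1$, and $\chi(F-C)\le r$ trivially. For $k\ge r-1$: this holds because $C$ is independent, so $\chi(F)\le\chi(F-C)+1$.

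For the upper bound $L\le k-1$, the plan is to show that every admissible $S$ satisfies $\lambda_1(S)\le k-1$. Fix $C\in\CC$ with $\chi(F-C)=k$. If some vertex $v$ of $S$ has $k$ neighbours forming a clique $K_k$ in $S$, map $C\to v$ and colour $F-C$ onto that clique; the negative edges meeting $C$ land on edges $v$–clique. But negative edges inside $F-C$ do not exist, since $C$ meets all of them. So $S$ must be $K_{k+1}$-free? That is too weak; the criterion actually only needs $v$ adjacent to $k$ distinct vertices, with no clique required. Hence every admissible $S$ has maximum degree at most $k-1$, so $\lambda_1(S)\le\Delta(S)\le k-1$.

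The final claim follows because $k=r-1$ forces $L=r-2$, and therefore $\tht(\sF)=(r-1)/(r-2)$.
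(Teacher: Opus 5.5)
\paragraph{Parts (ii) and (iii).} These are correct and follow the paper's route.
\begin{itemize}
\item When $\CC(\sF)=\varnothing$, every star $K_{1,t}$ is admissible, because the centre's fibre would be an independent negative-edge cover. Hence $L(\sF)=\infty$.
\item When $C\in\CC(\sF)$ with $\chi(F-C)=k(\sF)$, map $C$ to one vertex and the colour classes of $F-C$ to $k(\sF)$ distinct neighbours of it. This shows every admissible $S$ has $\Delta(S)\le k(\sF)-1$.
\item $K_{r-1}$ gives $L(\sF)\ge r-2$.
\end{itemize}
Two cleanups are needed. First, delete the abandoned claim that, when $\CC(\sF)=\varnothing$, every $S$ with an edge is admissible. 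It is false for $K_t$ with $t\ge\chi(F)$. Second, for the converse of (ii), cite the degree bound directly rather than ``part (iii)''. The degree bound needs only $\CC(\sF)\ne\varnothing$, not $r\ge3$, whereas (iii) assumes $r\ge3$ and so does not cover bipartite $F$.

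\paragraph{Gap in the forward direction of (i).} You write that ``an $F$-free graph has $o(n^2)$ edges'' and conclude $h_{\sF}(\rho)\le1$. But an $\sF$-free signed graph need not have an $F$-free underlying graph. If $\sF$ has a negative edge, the all-positive $K_n$ is $\sF$-free, so the inference fails as stated.

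What is true is that the negative-edge graph $G^-$ of an $\sF$-free $\sG$ is $F$-free, since a copy of $F$ using only negative edges realises every edge of $\sF$. Then K\H{o}v\'ari--S\'os--Tur\'an gives $e^-(\sG)=o(n^2)$, and hence $e^+(\sG)+\rho e^-(\sG)=e(G)+(\rho-1)e^-(\sG)\le\binom n2+o(n^2)$. With this repair your route is valid and bypasses the spectral theorem for this direction.

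However, your reason for abandoning the spectral route is backwards. For bipartite $F$, a proper $2$-colouring is itself a map $\sF\to_w\sT(K_2)$: every edge, of either sign, goes to the single negative edge, and the fibres are independent. So $K_2$ is \emph{inadmissible}. By heredity, every graph with an edge is then inadmissible, so $L(\sF)=0$ and $\tht(\sF)=\infty$. This is the paper's one-line argument. Your converse of (i), using $K_2$ instead of $K_{r-1}$, is fine.
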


Let $P_t$ and $C_t$ denote the path and cycle of order $t$, respectively. For three-chromatic underlying graphs, this spectral approach yields a complete classification.

\begin{theorem}\label{thm:three}
Let $\chi(F)=3$.
\begin{enumerate}[label=\textup{(\roman*)}]
\item If $\CC(\sF)=\varnothing$, then $\tht(\sF)=1$.
\item If $\CC(\sF)\ne\varnothing$ and at least one of $C_4,C_5$ is $\sF$-admissible, then $L(\sF)=2$ and $\tht(\sF)=3/2$.
\item If $\CC(\sF)\ne\varnothing$ and neither $C_4$ nor $C_5$ is admissible, let $t=\min\{s:P_s\notin\mathcal A(\sF)\}$. Then $t$ exists, $3\le t\le6$, and
\[
 \begin{aligned}
 L(\sF)&=2\cos\frac\pi t,\\
 \tht(\sF)&=1+\frac1{2\cos(\pi/t)}.
 \end{aligned}
\]
\end{enumerate}
The possible coefficients are exactly
\[
 \left\{1,\frac32,1+\frac1{\sqrt3},\frac{1+\sqrt5}{2},1+\frac1{\sqrt2},2\right\},
\]
and every value occurs.
If $\tht(\sF)>1$, then $L(\sF)$ is attained by an admissible graph of order at most five, and the order bound is sharp. The values $t=3,4,5,6$ all occur in (iii), and no $t\ge7$ occurs there.
\end{theorem}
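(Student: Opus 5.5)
The plan is to work entirely inside the spectral picture of Theorem~\ref{thm:spectral}, using Theorem~\ref{thm:structure} to confine everything to graphs of spectral radius at most $2$. First, (i) is Theorem~\ref{thm:structure}(ii). Assume $\CC(\sF)\ne\varnothing$. With $r=3$, Theorem~\ref{thm:structure}(iii) gives $k(\sF)\in\{2,3\}$ and $1\le L(\sF)\le k(\sF)-1\le 2$.

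I will use two closure facts.
\begin{enumerate}[label=\textup{(\alph*)}]
\item $\mathcal A(\sF)$ is closed under taking subgraphs. A copy of $\sF$ in a blow-up of $\sT(S')$ sends negative edges into negative edges, that is, into $E(S')\subseteq E(S)$. It is therefore also a copy in the corresponding blow-up of $\sT(S)$.
\item Since $\lambda_1$ of a graph is the maximum over its components, $L(\sF)$ is the supremum of $\lambda_1$ over \emph{connected} admissible graphs.
\end{enumerate}
So every connected admissible graph has $\lambda_1\le2$. By Smith's classification it is a Dynkin graph ($P_n$, $D_n$, $E_6,E_7,E_8$, with $\lambda_1<2$) or an extended Dynkin graph ($C_n$, $\tilde D_n$, $\tilde E_6,\tilde E_7,\tilde E_8$, with $\lambda_1=2$).

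For (ii), an admissible $C_4$ or $C_5$ gives $L(\sF)\ge2$, hence $L(\sF)=2$ and $\tht(\sF)=3/2$. For (iii) I record some basic exclusions.
\begin{itemize}
\item $C_3\notin\mathcal A(\sF)$: blow-ups of $\sT(C_3)$ are all-negative complete tripartite graphs, and these contain $\sF$ because $\chi(F)=3$.
\item Stars: an embedding of $\sF$ into a blow-up of $\sT(K_{1,m})$ sends an independent set $C$ to the centre part. Every negative edge must touch the centre, so $C\in\CC(\sF)$. The remaining graph $F-C$ must be properly coloured by the $m$ leaf parts. Conversely, any such pair $(C,\text{colouring})$ gives an embedding. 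Hence $K_{1,m}\in\mathcal A(\sF)$ if and only if $m<k(\sF)$.
\end{itemize}
If $k(\sF)=2$, then $P_3=K_{1,2}$ is inadmissible. Every connected admissible graph is then a single edge or vertex, so $t=3$ and $L=1=2\cos(\pi/3)$. If $k(\sF)=3$, then $K_{1,3}$ is not automatically excluded, and the main work is there.

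The step I expect to be the main obstacle is showing, when $k(\sF)=3$ and neither $C_4$ nor $C_5$ is admissible, that some $P_s$ with $s\le6$ is inadmissible. In the same step I must rule out every admissible connected graph whose $\lambda_1$ exceeds $2\cos(\pi/t)$. I would use the same dictionary as for stars: an embedding into a blow-up of $\sT(S)$ is a proper colouring of $F$ by $V(S)$ in which negative edges receive adjacent colours.
\begin{itemize}
\item Inadmissibility of $C_4$ and $C_5$ supplies such colourings with colours on a $4$- or $5$-cycle.
\item Together with the independent set $C\in\CC(\sF)$ and a $3$-colouring of $F-C$, I would try to re-route these colourings so that the colour classes lie along a path of at most $6$ vertices. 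The idea is to "cut" the cycle at one colour and duplicate it at both ends, checking that this is legitimate because that colour class can be taken inside $C$.
\end{itemize}
Once $P_t$ is inadmissible with $t\le6$, subgraph closure bounds the diameter of admissible graphs. It also removes all cycles: $C_3,C_4,C_5$ directly, and $C_n$ for $n\ge6$ because $C_n$ contains $P_t$. The extended trees $\tilde D_n$, $\tilde E_6,\tilde E_7,\tilde E_8$ (including $\tilde D_4=K_{1,4}$, excluded by the star criterion) and $D_n$, $E_n$ are handled in one of two ways:
\begin{itemize}
\item either they contain $P_t$;
\item or they contain $K_{1,3}$ together with a longer branch, and I would show directly that this branched structure is inadmissible, using the same colouring argument.
\end{itemize}
This leaves $P_{t-1}$ as the extremal graph, giving $L=2\cos(\pi/t)$. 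The mechanism also shows $t\ge7$ is impossible: $P_7$ inadmissible with $P_6$ admissible would contradict the cut-and-duplicate argument.

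Finally, I list the values $1$, $3/2$, and $1+1/(2\cos(\pi/t))$ for $t=3,4,5,6$, which are $2$, $1+1/\sqrt2$, $(1+\sqrt5)/2$, and $1+1/\sqrt3$. For realization, I would construct small explicit $\sF$ for each value: a negative triangle with pendant structure for $t=3$, suitable odd-cycle signings for $t=4,5,6$, and an $\sF$ admitting $C_4$ for $3/2$. In each case I would verify admissibility and inadmissibility via the colouring dictionary. The order bound follows because the extremal graph is $C_4$, $C_5$, or $P_{t-1}$ with $t\le6$, so it has at most five vertices. Sharpness comes from examples where $L$ is attained only by $C_5$ or only by $P_5$ ($t=6$). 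For the latter, note that every graph on at most four vertices with $\lambda_1\ge\sqrt3$ contains $K_{1,3}$ or $C_4$ or $C_3$, and these are excluded.
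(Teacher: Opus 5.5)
\textbf{There is a genuine gap at exactly the step you flagged.} You never prove that $P_6\notin\mathcal A(\sF)$ once $C_4$ is inadmissible. Everything in (iii) rests on this: the existence of $t$, the bound $t\le6$, the reduction of admissible components to paths on at most $t-1$ vertices, the five-vertex order bound, and the exclusion of $t\ge7$.

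Your cut-and-duplicate mechanism cannot supply it. Cutting $C_4$ at one colour produces $P_5$. Yet the paper's $\sF_9$ satisfies $\sF_9\to_w\sT(C_4)$, via $a_i\mapsto i$ and $b_i\mapsto i+1$ modulo $4$, while $P_5\in\mathcal A(\sF_9)$. Under that map every colour class contains a vertex with negative neighbours in both adjacent colours. Examples are $a_4\in A$ and $b_0\notin A$, where $A\in\CC(\sF_9)$. So placing the cut class inside $C$ does not help, because a vertex of $C$ can be two-sided. For $C_5$ you give no argument that some class can be cut at all.

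The missing idea is Lemma~\ref{lem:twostars}. Start from $f:\sF\to_w\sT(K_{2,2})$ with sides $X,Y$, and fix $C\in\CC(\sF)$.
\begin{enumerate}
\item Send each $v\notin C$ to a new leaf labelled $f(v)$.
\item Send each $v\in C$ to a centre $c_X$ or $c_Y$, according to the side containing $f(v)$.
\item Join $c_X$ to the leaves labelled by $Y$, and $c_Y$ to those labelled by $X$.
\end{enumerate}
Each negative edge has exactly one end in $C$ and crosses between $X$ and $Y$ under $f$, so it lands on a centre--leaf edge. Positive edges receive distinct images. Thus $\sF\to_w\sT(2P_3)$. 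Since $2P_3$ is a spanning subgraph of $P_6$, also $\sF\to_w\sT(P_6)$. In other words, every colour is split into its $C$-part and the rest, and the $C$-parts are merged by side; no single cut is involved.

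There are two further errors.

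\emph{Off-by-one in the star criterion.} Your own star criterion with $m=3$ gives $K_{1,3}\notin\mathcal A(\sF)$ whenever $k(\sF)=3$. Moreover $k(\sF)\le\chi(F)=3$ always holds here. So every admissible graph has maximum degree at most $2$, and its components are paths and cycles. Neither Smith's classification nor any argument about branched trees is needed; this is how the paper proves (ii) and reduces (iii) to paths. You in fact use $K_{1,3}\notin\mathcal A(\sF)$ yourself in the sharpness remark.

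\emph{The proposed realizations fail.}
\begin{enumerate}
\item Any $\sF$ containing an all-negative triangle has $\CC(\sF)=\varnothing$, hence coefficient $1$, not $t=3$.
\item Any signing of an odd cycle with $\CC(\sF)\ne\varnothing$ has $k(\sF)\le2$, since deleting a nonempty independent set leaves a union of paths. Hence $t=3$ and the coefficient is $2$, so these cannot give $t=4,5,6$.
\end{enumerate}
Values $t\ge4$ require $k(\sF)=3$, that is, $F-C$ non-bipartite for every $C\in\CC(\sF)$. The paper realizes them with $\sF_6$, $\sF_8$ and $\sF_9$. It realizes $t=3$ with the all-positive triangle and $3/2$ with $\sF_5$.
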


The proofs appear in Sections~\ref{sec:spectral}--\ref{sec:three}. Section~\ref{sec:componentbounds} gives explicit bounds on the order of connected admissible graphs when $\CC(\sF)\ne\varnothing$ and some path is inadmissible, and derives consequences for algebraic degrees and rational coefficients.

\section{Definitions and preliminaries}\label{sec:preliminaries}

\subsection{Signed graphs and containment}
All graphs are finite, simple, and undirected. A signed graph is denoted by $\sG=(G,\sigma_G)$, where $G=(V(G),E(G))$ is its underlying graph and $\sigma_G:E(G)\to\{+1,-1\}$ is its signature. Write $E^{\pm}(\sG)=\{e\in E(G):\sigma_G(e)=\pm1\}$, $e^{\pm}(\sG)=|E^{\pm}(\sG)|$. Thus $e(G)=e^+(\sG)+e^-(\sG)$. Vertex sets, degrees, independent sets, and proper colorings refer to the underlying graphs; in particular, $\chi(F)$ is the ordinary chromatic number of $F$. For $X\subseteq V(G)$, the induced signed subgraph $\sG[X]$ has underlying graph $G[X]$ and the restriction of $\sigma_G$ to its edges.

\begin{definition}\label{def:containment}
Let $\sF=(F,\sigma_F)$ and $\sG=(G,\sigma_G)$ be signed graphs. We write $\sF\subseteq_w\sG$ if there is an injective map $f:V(F)\to V(G)$ such that
\begin{align*}
 uv\in E^-(\sF)&\ \Longrightarrow\ f(u)f(v)\in E^-(\sG),\\
 uv\in E^+(\sF)&\ \Longrightarrow\ f(u)f(v)\in E(G).
\end{align*}
The chosen vertices and corresponding edges form a copy of $\sF$ in $\sG$. If there is no such copy, then $\sG$ is $\sF$-free.
\end{definition}

The two signs thus specify requirements of different strength. For example, if $(K_3,+)$ and $(K_3,-)$ denote the all-positive and all-negative triangles, respectively, then $(K_3,+)\subseteq_w(K_3,-)$, $(K_3,-)\not\subseteq_w(K_3,+)$. Equivalently, $\sF$ can be obtained from $\sG$ by deleting vertices and edges and changing some negative edges to positive. This operation acts on the given signature and differs from switching. Throughout, the fixed forbidden signed graph $\sF$ has at least one edge; isolated vertices are allowed.

\subsection{Weighted extremal functions and Tur\'an coefficients}
For $n\ge2$, let $N_n=\binom n2$. For a signed graph $\sG$ of order $n$, define
\[
 \alpha(\sG)=\frac{e^+(\sG)}{N_n},
 \qquad \beta(\sG)=\frac{e^-(\sG)}{N_n}.
\]
For a fixed real number $\rho\ge1$, define
\[
 \ex_\rho(n,\sF)
 =\max\{e^+(\sG)+\rho e^-(\sG):|V(G)|=n,\ \sG\text{ is }\sF\text{-free}\}.
\]
Choose an extremal graph and then a uniformly random set of $m$ vertices, where $n\ge m\ge2$. Its induced signed subgraph is still $\sF$-free, so $\ex_\rho(n,\sF) / \binom n2
 \le \ex_\rho(m,\sF) / \binom m2$. The normalized extremal numbers are nonincreasing and nonnegative. Hence the limit $h_{\sF}(\rho)=\lim_{n\to\infty}\frac{\ex_\rho(n,\sF)}{N_n}$ exists.

\begin{definition}\label{def:coefficient}
The Tur\'an coefficient of $\sF$ is
\[
 \tht(\sF)=\sup\{\rho\ge1:h_{\sF}(\rho)\le1\},
\]
with the value $\infty$ allowed.
\end{definition}

Since $e^-(\sG)\le N_n$, the function $h_{\sF}$ is nondecreasing and satisfies $|h_{\sF}(\rho)-h_{\sF}(\rho')|\le|\rho-\rho'|$. Also $h_{\sF}(1)\le1$. Thus the admissible weights form an interval containing $1$, and this interval contains its right endpoint when $\tht(\sF)$ is finite. The condition $h_{\sF}(\rho)\le1$ means that for every $\varepsilon>0$ there is an $n_0=n_0(\sF,\rho,\varepsilon)$ such that every $\sF$-free signed graph $\sG$ of order $n\ge n_0$ satisfies
\[
 e^+(\sG)+\rho e^-(\sG)\le(1+\varepsilon)N_n.
\]
Here $\rho$ is fixed before $n$ tends to infinity. Moreover,
\[
 \tht(\sF)=\infty
 \iff
 \max_{\substack{|V(G)|=n\\\sG\text{ is }\sF\text{-free}}}\beta(\sG)=o(1).
\]

\subsection{Admissible graphs and spectral notation}
We write $\sF\to_w\sR$ if a map $f:V(F)\to V(R)$ satisfies the two edge conditions in Definition~\ref{def:containment}, without requiring injectivity. Since $R$ is loopless, adjacent vertices of $F$ must be assigned distinct vertices of $R$.

For an ordinary graph $S$, recall that the complete signed graph $\sT(S)$ is defined by
\[
 \sT(S)=(K_{|V(S)|},\sigma_S),
 \qquad
 \sigma_S(ij)=
 \begin{cases}
 -1,&ij\in E(S),\\
 +1,&ij\notin E(S),
 \end{cases}
\]
where the complete graph has vertex set $V(S)$. In particular, the negative edges of $\sT(P_t)$ form a spanning path, and all remaining edges are positive.

A blow-up of $\sR$ replaces each vertex $i$ by an independent set $V_i$ and each edge $ij$ by all edges of sign $\sigma_R(ij)$ between $V_i$ and $V_j$; a nonedge produces no edges between the corresponding parts. Then $\sF\to_w\sR$ if and only if $\sF$ occurs in some blow-up of $\sR$. A copy gives the map assigning each vertex to its part. Conversely, sufficiently large parts allow distinct representatives for all vertices assigned to each part.

\begin{definition}\label{def:admissible}
An ordinary graph $S$ with at least one vertex is $\sF$-admissible if $\sF\not\to_w\sT(S)$. Denote the class of such graphs by $\mathcal A(\sF)$. Equivalently, every blow-up of $\sT(S)$ is $\sF$-free.
\end{definition}

For an ordinary graph $S$, let $A(S)$ be its adjacency matrix and $\lambda_1(S)$ its largest eigenvalue. By the Perron--Frobenius theorem, the largest eigenvalue of a nonnegative real symmetric matrix equals its spectral radius and has a nonnegative eigenvector. Thus $\lambda_1(S)$ is also the adjacency spectral radius of $S$. The matrix $A(S)$ used here has entries in $\{0,1\}$; it differs from the signed adjacency matrix $A(\sG)$, whose entries lie in $\{0,+1,-1\}$.

Define
\[
 L(\sF)=\sup\{\lambda_1(S):S\in\mathcal A(\sF)\}\in[0,\infty].
\]
A finite value $L(\sF)$ is attained if $\lambda_1(S_*)=L(\sF)$ for some $S_*\in\mathcal A(\sF)$.

Admissibility is inherited by ordinary subgraphs with at least one vertex. If $S'\subseteq S$ and $|V(S')|\ge1$, inclusion gives $\sT(S')\to_w\sT(S)$: negative edges remain negative, while positive edges may become negative. Hence $S\in\mathcal A(\sF)$ implies $S'\in\mathcal A(\sF)$.

\subsection{Regularity}
For a bipartite graph $H$ with parts $U,V$, define
\[
 d_H(X,Y)=\frac{e_H(X,Y)}{|X||Y|}
 \qquad(\varnothing\ne X\subseteq U,\ \varnothing\ne Y\subseteq V).
\]
We call $H$ $\gamma$-regular if
\[
 |d_H(X,Y)-d_H(U,V)|<\gamma
\]
whenever $|X|\ge\gamma|U|$ and $|Y|\ge\gamma|V|$.

For an edge-colored complete graph, regularity in a color refers to the bipartite graph consisting of the edges of that color. We use the following form of the many-color regularity lemma of Koml\'os and Simonovits~\cite[Theorem~1.18]{KS}.

\begin{lemma}\label{lem:multicolor-regularity}
For every $0<\varepsilon<1$ and positive integers $q,k_0$, there are integers $K,n_0$ such that every coloring of the edges of $K_n$ with $q$ colors, where $n\ge n_0$, admits a partition
\[
 V(K_n)=V_0\sqcup V_1\sqcup\cdots\sqcup V_k
\]
with $|V_0|\le\varepsilon n$, $k_0\le k\le K$, and $|V_1|=\cdots=|V_k|>0$, such that all but at most $\varepsilon k^2$ choices $1\le i<j\le k$ give an $\varepsilon$-regular bipartite graph in every color.
\end{lemma}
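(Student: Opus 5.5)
The plan is to rerun Szemerédi's energy-increment proof, with the index summed over the $q$ colour classes. An alternative would be to apply the ordinary regularity lemma to one colour and then refine colour by colour. That does not work directly, because a refinement made for a later colour can destroy the regularity already obtained for an earlier colour. So I will carry all colours in a single potential function.

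Write the colour classes as $E_1,\dots,E_q$, with $d_c(X,Y)$ the density of $E_c$ between $X$ and $Y$. For a partition $\mathcal P=\{V_0,V_1,\dots,V_k\}$, first distribute the exceptional set $V_0$ into singletons. Then define
\[
 \operatorname{ind}(\mathcal P)=\frac1{n^2}\sum_{c=1}^{q}\sum_{i\ne j}|V_i||V_j|\,d_c(V_i,V_j)^2 .
\]
Since $\sum_c d_c(X,Y)=1$ for disjoint $X,Y$ (the graph is complete), we have $\sum_c d_c^2\le1$, and hence $0\le\operatorname{ind}(\mathcal P)\le1$. The bound does not even depend on $q$.

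By convexity (Cauchy--Schwarz), the index does not decrease under refinement, colour by colour. Now take a witness $X\subseteq V_i$, $Y\subseteq V_j$ to $\varepsilon$-irregularity of $(V_i,V_j)$ in colour $c$. Splitting $V_i$ along $X$ and $V_j$ along $Y$ raises the colour-$c$ contribution of that pair by at least $\varepsilon^4|V_i||V_j|/n^2$. This is the standard defect form of Cauchy--Schwarz, and the other colours cannot decrease.

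Start from an arbitrary equipartition into $k_0'\ge\max(k_0,1/\varepsilon)$ parts. Suppose more than $\varepsilon k^2$ pairs are irregular in at least one colour. For each such pair, pick one witnessing colour and its sets $X,Y$. Take the common refinement of all witnesses, which splits each $V_i$ into at most $2^{k}$ atoms.

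Then re-equalize as in Szemerédi's proof. Cut each atom into pieces of a common size $\lfloor |V_i|/4^k\rfloor$, and add the leftovers to $V_0$. This enlarges $V_0$ by at most $n/2^k$, and it costs only a small loss in the index, which is controlled exactly as in the one-colour case.

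The net effect is that the index grows by at least $\varepsilon^5/2$ per round. The number of parts grows from $k$ to at most $k4^k$, and $|V_0|$ grows by at most $\varepsilon n/2^{k}$-type amounts, so its total stays below $\varepsilon n$ provided $k_0'$ is large. Since the index is at most $1$, there are at most $2\varepsilon^{-5}$ rounds. Hence $K$ is a tower of height $O(\varepsilon^{-5})$ over $k_0'$, independent of $q$ apart from the bookkeeping, and $n_0$ is chosen so that all parts remain nonempty.

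The step needing care is the bookkeeping in the re-equalization. We must check that moving vertices to $V_0$ decreases the index by less than half the gain. In one colour this is the standard estimate that moving $\delta n$ vertices changes the index by $O(\delta)$. With $q$ colours, each pair's contribution is still bounded by $\sum_c d_c^2\le 1$, so the same $O(\delta)$ bound holds with no dependence on $q$, and the argument closes. Since the lemma is exactly \cite[Theorem~1.18]{KS}, in the paper I would only indicate this modification and cite it.
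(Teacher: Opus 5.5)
Your proof is correct, but it does not follow the paper's route. The paper gives no argument of its own. It simply invokes Theorem~1.18 of Koml\'os and Simonovits, with one adjustment that your closing remark misses when it calls the lemma ``exactly'' that theorem. The cited version tests regularity on subsets with $|X|>\varepsilon|U|$, whereas here the threshold is $|X|\ge\varepsilon|U|$. The paper therefore applies the cited result with parameter $\varepsilon/2$. This gives the non-strict convention, together with $|V_0|\le\varepsilon n$ and at most $\varepsilon k^2$ bad pairs.

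Your self-contained energy-increment argument needs no such conversion, because the defect estimate only uses $|X|\ge\varepsilon|V_i|$ and $|Y|\ge\varepsilon|V_j|$. It also yields a little more than the citation. Since the colour classes partition $E(K_n)$, you have $\sum_c d_c(X,Y)^2\le\bigl(\sum_c d_c(X,Y)\bigr)^2=1$, so the number of rounds, and hence $K$, is independent of $q$. The paper does not need this, since it only uses $q=3$. The citation's advantage is brevity.

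One simplification: the re-equalization step you single out as delicate costs nothing. Once the exceptional vertices are treated as singletons, cutting each atom into equal pieces and sending the leftovers to $V_0$ produces a refinement of the atom partition, so the index cannot decrease. The only thing to verify is $|V_0|\le\varepsilon n$. For this, note that each round adds at most $n/2^{k}$ vertices (not $\varepsilon n/2^k$), and these increments are summed over the geometrically growing values of $k$.
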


The cited version tests subsets of size strictly greater than $\varepsilon$ times the size of their part. Applying it with parameter $\varepsilon/2$ gives the convention used here.

\section{Spectral characterization}\label{sec:spectral}

We prove Theorem~\ref{thm:spectral} using the following two lemmas.
The following lemma uses the Motzkin--Straus coordinate-merging argument~\cite{MS} to reduce the quadratic maximization to a clique of the reduced graph.

\begin{lemma}\label{lem:compression}
Let $\rho\ge1$ and let $\sR$ be a signed graph of order $k\ge1$. Define $W_\rho(\sR)$ to have diagonal entries zero and off-diagonal entries $0$, $1$, or $\rho$ according as the pair is missing, positive, or negative. The maximum of $x^{\mathsf T}W_\rho(\sR)x$ over $x\in\Delta_{k-1}$ has a maximizer whose support is a clique in $R$.
\end{lemma}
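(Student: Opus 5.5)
The plan is to follow the Motzkin--Straus merging argument. Start from a maximizer of $q(x)=x^{\mathsf T}W_\rho(\sR)x$ over $\Delta_{k-1}$; one exists because $q$ is continuous and the simplex is compact. Among all maximizers, pick $x$ with the smallest support $\operatorname{supp}(x)$. The aim is to show that this support is a clique in $R$. Suppose it is not. Then there are indices $i\ne j$ with $x_i,x_j>0$ and $ij\notin E(R)$, so $W_{ij}=0$.

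Next, move mass between $i$ and $j$. Put $y=x+t(e_i-e_j)$ for $t\in[-x_i,x_j]$, so that $y$ stays in $\Delta_{k-1}$. Expand $q(y)$ as a polynomial in $t$. The quadratic coefficient is $W_{ii}+W_{jj}-2W_{ij}=0$, because the diagonal entries vanish and $W_{ij}=0$. Hence
\[
 q(y)=q(x)+2t\bigl((W_\rho x)_i-(W_\rho x)_j\bigr),
\]
which is affine in $t$. Since $x$ is a maximizer and $t=0$ is an interior point of the interval $[-x_i,x_j]$, the linear coefficient must be zero. Otherwise moving $t$ slightly in one direction would increase $q$. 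So $q$ is constant along the whole segment.

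Now take $t=x_j$, the endpoint. This gives a maximizer $y$ with $y_j=0$ and with support $\operatorname{supp}(x)\setminus\{j\}$. That contradicts the minimality of the support, so the support of $x$ is a clique in $R$.

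The only step that needs care is the vanishing of the quadratic term, and it relies on two facts. First, the diagonal of $W_\rho(\sR)$ is zero. Second, the entry for a non-adjacent pair is $0$ regardless of $\rho$. The value $\rho\ge1$ and the signs of the edges inside the support play no role. The case $k=1$ is trivial, since a single vertex is a clique.
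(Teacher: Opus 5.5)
Your proposal is correct and follows essentially the same argument as the paper: a maximizer of minimal support, mass shifted between two nonadjacent support vertices, and the observation that the form is affine along that segment because $W_{ii}=W_{jj}=W_{ij}=0$, so an endpoint gives a maximizer with smaller support. The paper argues directly that one endpoint is at least as good, whereas you first show the linear coefficient vanishes by interior maximality; the two are equivalent.
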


\begin{proof}
Write $W=W_\rho(\sR)$ and choose a maximizer $x$ with as few positive coordinates as possible. Such a vector exists because the simplex is compact. If its support is not a clique, choose nonadjacent vertices $i,j$ with $x_i,x_j>0$. Let $c=x_i+x_j$ and $Q=V(R)\setminus\{i,j\}$. Replacing $(x_i,x_j)$ by $(u,c-u)$, where $0\le u\le c$, changes the quadratic form to
\[
 \sum_{p\in Q}\sum_{q\in Q}W_{pq}x_p x_q
 +2u\sum_{q\in Q}W_{iq}x_q
 +2(c-u)\sum_{q\in Q}W_{jq}x_q.
\]
This expression is affine in $u$, so its value at one of the endpoints $u=0,c$ is at least its value at $u=x_i$. That endpoint therefore gives another maximizer with fewer positive coordinates, a contradiction.
\end{proof}

The next lemma adapts the greedy proof of the Key Lemma of Koml\'os and Simonovits~\cite[Theorem~2.1]{KS}. We apply regularity either to the negative-edge bipartite graph or to the bipartite graph containing both signs. The proof also allows several vertices of $\sF$ to be assigned to the same part.

\begin{lemma}\label{lem:embedding}
Let $m=|V(F)|$, $0<\delta\le1$, and $a=\delta/2$. Suppose
\[
 0<\gamma\le\min\left\{\frac\delta2,\frac{a^m}{2(m+1)}\right\},
 \qquad t>\frac{2m}{a^m}.
\]
Let $\sR$ be a signed graph on $[k]$, and let $V_1,\ldots,V_k$ be disjoint sets of size $t$ in a signed graph $\sG$. For every negative edge $ij$ of $\sR$, assume that the bipartite graph consisting of the negative edges between $V_i,V_j$ is $\gamma$-regular and has density at least $\delta$. For every positive edge $ij$ of $\sR$, assume the same properties for the bipartite graph consisting of all edges between $V_i,V_j$.

Suppose each vertex $v$ of $\sF$ is assigned an index $c(v)\in[k]$, so that the endpoints of every edge of $\sF$ are assigned adjacent vertices of $\sR$, and the endpoints of every negative edge are assigned a negative edge of $\sR$. Then $\sG$ contains a copy of $\sF$ in which the representative of $v$ belongs to $V_{c(v)}$ and all representatives are distinct.
\end{lemma}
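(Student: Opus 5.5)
We embed the vertices of $F$ one at a time, following the greedy proof of the Key Lemma, while keeping for every unembedded vertex a candidate set that is still large. Order $V(F)=\{v_1,\ldots,v_m\}$. For each vertex $w$ we maintain a candidate set $C(w)\subseteq V_{c(w)}$, starting with $C(w)=V_{c(w)}$. For each edge $vw$ of $\sF$, the \emph{relevant bipartite graph} between $V_{c(v)}$ and $V_{c(w)}$ is the negative-edge graph if $vw$ is negative; if $vw$ is positive, it is the negative-edge graph when $c(v)c(w)$ is a negative edge of $\sR$, and the all-edges graph when $c(v)c(w)$ is positive. The assignment hypothesis guarantees that $c(v)\neq c(w)$, so this is always well defined. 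In every case the relevant graph is $\gamma$-regular with density at least $\delta$, and any edge it contains realizes $vw$ with an admissible sign: a negative edge of $\sF$ is realized by a negative edge of $\sG$, and a positive edge of $\sF$ by an edge of $\sG$ of either sign.

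We keep the invariant that after $v_1,\ldots,v_{s}$ are embedded, every unembedded $w$ satisfies $|C(w)|\ge a^{d_s(w)}t\ge a^m t$, where $d_s(w)$ is the number of embedded neighbours of $w$. To embed $v_{s+1}$, let $w$ be any later neighbour of $v_{s+1}$ in $\sF$. By the invariant $|C(w)|\ge a^m t\ge\gamma t$, using $\gamma\le a^m/(2(m+1))$. Regularity then implies that fewer than $\gamma t$ vertices $x\in V_{c(v_{s+1})}$ have fewer than $(\delta-\gamma)|C(w)|$ relevant neighbours in $C(w)$; otherwise these vertices would form a set $X$ with $|X|\ge\gamma t$ and $d(X,C(w))<\delta-\gamma\le d(V_i,V_j)-\gamma$. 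Since $\gamma\le\delta/2$, every other $x$ has at least $(\delta/2)|C(w)|=a|C(w)|$ relevant neighbours in $C(w)$. Summing over the at most $m$ later neighbours, fewer than $m\gamma t$ vertices of $V_{c(v_{s+1})}$ are bad.

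Several vertices of $F$ may be assigned to the same part, so we must also avoid the at most $m$ vertices already used as representatives. The number of choices for $v_{s+1}$ is therefore at least
\[
|C(v_{s+1})|-m\gamma t-m\ \ge\ a^m t-\frac{m}{2(m+1)}a^m t-m\ >\ \frac{a^m t}{2}-m\ >\ 0,
\]
where the last step uses $t>2m/a^m$. We pick such a representative $x$ and replace $C(w)$ by $C(w)\cap N_{\mathrm{rel}}(x)$ for each later neighbour $w$. This multiplies $|C(w)|$ by at least $a$, so the invariant is preserved. Representatives are distinct because each new one avoids all earlier representatives. Every edge $v_iv_j$ with $i<j$ is realized correctly, since $v_j$ is chosen from its candidate set, which lies inside the relevant neighbourhood of $v_i$.

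The only delicate point is the bookkeeping. The regularity hypothesis must be applied to the current candidate set $C(w)$, not to all of $V_{c(w)}$, which is why we need $|C(w)|\ge\gamma t$. In addition, the count of excluded vertices, namely the bad ones together with those already used in the same part, must stay below $a^m t$. The stated bounds on $\gamma$ and $t$ are chosen exactly so that this inequality closes.
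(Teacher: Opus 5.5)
Your proposal is correct and is essentially the paper's proof. It uses the same greedy embedding with candidate sets, the same regularity count (fewer than $\gamma t$ bad vertices per later neighbour, plus at most $m$ used vertices), and the same closing inequality $a^m t/2-m>0$. The only differences are cosmetic: you carry the weaker invariant $a^{d}t$ in place of $(\delta-\gamma)^{r}t$, and you spell out the ``relevant'' bipartite graph explicitly, which is exactly the paper's $B_{c(u)c(v)}$.
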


\begin{proof}
For each edge $ij$ of $\sR$, let $B_{ij}$ be the bipartite graph specified in the hypothesis. Thus $B_{ij}$ uses only negative edges when $ij$ is negative, and both signs when $ij$ is positive. We choose the representatives one at a time. For each vertex $v$ not yet chosen, let $C_v\subseteq V_{c(v)}$ consist of the vertices adjacent in the appropriate $B_{c(u)c(v)}$ to the chosen representative of every already chosen neighbor $u$ of $v$. Initially $C_v=V_{c(v)}$. Used vertices are excluded when the next representative is selected, rather than removed from these sets.

After $r_v$ neighbors of $v$ have been chosen, we ensure inductively that
\[
 |C_v|\ge(\delta-\gamma)^{r_v}t\ge a^m t\ge\gamma t.
\]
This holds initially. Consider the next vertex $u$. For each unchosen neighbor $v$ of $u$, fewer than $\gamma t$ vertices in $V_{c(u)}$ have fewer than $(\delta-\gamma)|C_v|$ neighbors in $C_v$ in $B_{c(u)c(v)}$. Otherwise that set of vertices and $C_v$ would violate $\gamma$-regularity, since the density of $B_{c(u)c(v)}$ is at least $\delta$.

There are at most $m$ such neighbors and at most $m$ used vertices. The number of choices in $C_u$ avoiding all these exclusions is at least
\[
 |C_u|-m\gamma t-m
 \ge \frac{a^m t}{2}-m>0.
\]
Choose one of them, say $z$. For each unchosen neighbor $v$, replace $C_v$ by $C_v\cap N_{B_{c(u)c(v)}}(z)$. The new size is at least $(\delta-\gamma)|C_v|$, which proves the induction step. The inequalities $\delta-\gamma\ge a$, $r_v\le m$, and $a\le1$ justify the displayed lower bounds. The construction selects distinct representatives even when several vertices are assigned to one part. Every negative edge of $\sF$ is realized in a negative $B_{ij}$, and every positive edge is realized by an edge of either sign, as required.
\end{proof}

\begin{proof}[Proof of Theorem~\ref{thm:spectral}]
For the lower bound, fix $S\in\mathcal A(\sF)$ of order $s$ and $x\in\Delta_{s-1}$. Take a blow-up $\sG_n$ of $\sT(S)$ whose part sizes are $x_i n+O(1)$. It is $\sF$-free by admissibility, and hence
\[
 h_{\sF}(\rho)\ge\lim_{n\to\infty}
 \frac{e^+(\sG_n)+\rho e^-(\sG_n)}{\binom n2}
 =x^{\mathsf T}M_\rho(S)x.
\]
Taking the maximum over $x$ and the supremum over $S$ gives the lower bound.

For the reverse inequality, fix $\eta>0$ and let $m=|V(F)|$. Choose $0<\delta\le1$, then $k_0$, and then $\varepsilon>0$, so that
\[
 \rho(\delta+4\varepsilon+1/k_0)<\eta,
\]
\[
 2\varepsilon\le\min\left\{\frac\delta2,
 \frac{(\delta/2)^m}{2(m+1)}\right\}.
\]
Let $\sG$ be an $n$-vertex $\sF$-free signed graph. Color each edge of the complete graph on $V(G)$ according to whether it is a positive edge, a negative edge, or a nonedge of $\sG$. Apply Lemma~\ref{lem:multicolor-regularity} with $q=3$ and parameter $\varepsilon$ to obtain a partition
\[
 V(G)=V_0\sqcup V_1\sqcup\cdots\sqcup V_k,
\]
where $|V_0|\le\varepsilon n$, $k_0\le k\le K$, and $|V_i|=t$ for $1\le i\le k$, such that at most $\varepsilon k^2$ choices $i<j$ fail to be $\varepsilon$-regular in all three colors. Here $K$ is independent of $n$. We take $n$ large enough that $t>2m/(\delta/2)^m$.

For each of the other choices $i<j$, write
\[
 d^\pm_{ij}=\frac{e^\pm_{\sG}(V_i,V_j)}{t^2},
 \qquad d_{ij}=d^+_{ij}+d^-_{ij}.
\]
The bipartite graph consisting of all edges between \(V_i\) and \(V_j\) is \(2\varepsilon\)-regular. Indeed, for any \(X\subseteq V_i\) and \(Y\subseteq V_j\) with \(|X|,|Y|\ge 2\varepsilon t\), the absolute difference between \(d(X,Y)\) and \(d(V_i,V_j)\) is at most the sum of the corresponding density differences for positive and negative edges, and is therefore less than \(2\varepsilon\). The negative-edge bipartite graph is also $2\varepsilon$-regular.

Define a signed graph $\sR$ on $[k]$ as follows. Whenever the bipartite graphs between $V_i$ and $V_j$ are regular in all three colors, include a negative edge $ij$ if $d^-_{ij}\ge\delta$, and a positive edge $ij$ if $d^-_{ij}<\delta$ and $d_{ij}\ge\delta$. Leave $i,j$ nonadjacent in every other case. If $\sF\to_w \sR$, Lemma~\ref{lem:embedding} with $\gamma=2\varepsilon$ gives a copy of $\sF$ in $\sG$. Thus, if \(\dot G\) is \(\dot F\)-free, then \(\dot F\not\to_w\dot R\).

We next bound the weighted edge count. On a negative edge of $\sR$, the weighted density between the corresponding parts is at most $\rho$. On a positive edge it is at most $1+(\rho-1)\delta$. On a regular pair assigned no edge it is at most $\rho\delta$. The excess over the corresponding entry of $W_\rho(\sR)$ is therefore at most $\rho\delta$ on each regular pair. Summing gives an excess at most $\rho\delta n^2/2$. Edges meeting $V_0$, edges on the irregular pairs, and edges inside the $V_i$ contribute at most
\[
 \rho\varepsilon n^2,\qquad
 \rho\varepsilon k^2t^2,\qquad
 \rho kt^2/2,
\]
respectively. For $x=(1/k,\ldots,1/k)$, it follows that
\[
 \frac{e^+(\sG)+\rho e^-(\sG)}{\binom n2}
 \le\frac{n}{n-1}\left[x^{\mathsf T}W_\rho(\sR)x
       +\rho(\delta+4\varepsilon+1/k_0)\right].
\]
By Lemma~\ref{lem:compression}, the quadratic form is bounded by its maximum on a clique support. The signed graph on that support is $\sT(S)$ for some ordinary graph $S$, and $S$ is admissible since $\sF\not\to_w \sR$. Therefore
\[
 h_{\sF}(\rho)\le\sup_{S\in\mathcal A(\sF)}g_\rho(S)+\eta.
\]
The first assertion follows by letting \(\eta\to0\).

For the coefficient, the Rayleigh principle gives, for every $S\in\mathcal A(\sF)$ and $x\in\Delta_{|V(S)|-1}$,
\begin{align*}
 x^{\mathsf T}M_\rho(S)x
 &=1-\|x\|_2^2+(\rho-1)x^{\mathsf T}A(S)x\\
 &\le1+\big((\rho-1)\lambda_1(S)-1\big)\|x\|_2^2.
\end{align*}
Suppose first that $0<L(\sF)<\infty$. If $1\le\rho\le1+1/L(\sF)$, then
\[
 (\rho-1)\lambda_1(S)\le(\rho-1)L(\sF)\le1,
\]
and therefore
\[
 h_{\sF}(\rho)=\sup_{S\in\mathcal A(\sF)}g_\rho(S)\le1.
\]
If $\rho>1+1/L(\sF)$, the definition of the supremum gives $S\in\mathcal A(\sF)$ with $\lambda_1(S)>1/(\rho-1)$. Let $y\ne0$ be a nonnegative eigenvector for $\lambda_1(S)$ and define $x=y/(\sum_i y_i)$. Then
\[
 h_{\sF}(\rho)\ge x^{\mathsf T}M_\rho(S)x
 =1+\big((\rho-1)\lambda_1(S)-1\big)\|x\|_2^2>1.
\]
Thus $\tht(\sF)=1+1/L(\sF)$ in this case.

If $L(\sF)=0$, every admissible $S$ has $\lambda_1(S)=0$, so the same Rayleigh bound gives $h_{\sF}(\rho)\le1$ for every fixed $\rho\ge1$, and $\tht(\sF)=\infty$. If $L(\sF)=\infty$, for every $\rho>1$ we may choose $S$ with $\lambda_1(S)>1/(\rho-1)$ and repeat the eigenvector argument to obtain $h_{\sF}(\rho)>1$. Since $h_{\sF}(1)\le1$, this gives $\tht(\sF)=1$.
\end{proof}

Section~\ref{sec:componentbounds} discusses attainment and gives an explicit bound on the order of each connected admissible graph when \(1<\theta(\dot F)<\infty\) and \(\dot F\to_w\dot T(P_t)\) for some \(t\).

\section{Structural characterization and bounds}\label{sec:structure}
Recall that an independent negative-edge cover is a set $C\in\CC(\sF)$. Since $C$ is independent in the underlying graph $F$, every negative edge has exactly one endpoint in $C$. We first prove Theorem~\ref{thm:structure} and then establish a reduction used in the three-chromatic case.

\begin{proof}[Proof of Theorem~\ref{thm:structure}]
First we prove (i). If $F$ is bipartite, a proper two-coloring gives a map $\sF\to_w\sT(K_2)$. Every ordinary graph with an edge is therefore inadmissible. Thus $L(\sF)=0$ and $\tht(\sF)=\infty$. If \(\chi(F)=r\ge3\), then \(K_{r-1}\) is \(\dot F\)-admissible, since a map \(\dot F\to_w\dot T(K_{r-1})\) would induce a proper \((r-1)\)-coloring of \(F\), contradicting \(\chi(F)=r\). Hence
\[
L(\dot F)\ge \lambda_1(K_{r-1})=r-2>0.
\]
By the spectral characterization,
\[
\theta(\dot F)=1+\frac{1}{L(\dot F)}
\le 1+\frac{1}{r-2}<\infty.
\]
Thus \(\theta(\dot F)=\infty\) only if \(F\) is bipartite, completing the proof of (i).

For (ii), we first observe that
\begin{equation}\tag{1}\label{eq:star-test}
 \sF\to_w \sT(K_{1,d})
 \quad\Longleftrightarrow\quad
 \text{there is }C\in\CC(\sF)\text{ with }\chi(F-C)\le d.
\end{equation}
For the forward implication, let \(C\) consist of the vertices of \(F\) assigned to the center of the star. Then \(C\) is independent and meets every negative edge. The assignment of the remaining vertices to the leaves gives a proper coloring of \(F-C\). Conversely, choose such a set \(C\) and a proper coloring of \(F-C\) using at most as many colors as there are leaves. Assign all vertices of \(C\) to the center and each color class to a distinct leaf. Every negative edge is then represented by a negative edge between the center and a leaf, while the endpoints of every positive edge are assigned to distinct vertices.

If $\CC(\sF)$ is empty, every star $K_{1,d}$ is admissible. Since its spectral radius is $\sqrt d$, we have $L(\sF)=\infty$, giving $\tht(\sF)=1$. If $\CC(\sF)$ is nonempty, \eqref{eq:star-test} shows that $K_{1,k(\sF)}$ is inadmissible. Therefore every admissible graph $S$ satisfies $\Delta(S) \le k(\sF)-1$, $\lambda_1(S)\le\Delta(S)\le k(\sF)-1$. It follows that $\tht(\sF)>1$, with the value $\infty$ allowed. This proves (ii).

For (iii), if $C$ is independent, $\chi(F)\le\chi(F-C)+1$, while $\chi(F-C)\le\chi(F)$. Minimizing over the covers gives $r-1\le k(\sF)\le r$. The spectral bounds now follow from the admissible clique and the degree bound, and Theorem~\ref{thm:spectral} gives (iii).
\end{proof}

There is a direct extremal construction behind part (ii). If \(\mathcal C(\dot F)=\varnothing\), construct a signed graph \(\dot G\) consisting of an all-positive clique on \(X\), an independent set \(Y\), and all negative edges between \(X\) and \(Y\). If \(\dot G\) contained a copy of \(\dot F\), the vertices of \(F\) placed in \(Y\) would form an independent set meeting every negative edge of \(\dot F\), contradicting \(\mathcal C(\dot F)=\varnothing\). Thus \(\dot G\) is \(\dot F\)-free. Writing $|X|/n\to x\in(0,1)$ gives limiting densities $\alpha=x^2$ and $\beta=2x(1-x)$, and therefore
\[
 \frac{1-\alpha}{\beta}=\frac{1+x}{2x}\longrightarrow1
 \quad\text{as }x\longrightarrow 1.
\]
For every fixed $\rho>1$, choosing $x$ sufficiently close to one makes $\alpha+\rho\beta>1$. 

The following lemma reduces the $C_4$ case to $P_6$.
\begin{lemma}\label{lem:twostars}
Suppose $\CC(\sF)\ne\varnothing$ and $\sF\to_w \sT(K_{a,b})$, where $a,b\ge1$. Then
\[
 \sF\to_w \sT(K_{1,a}\mathbin{\dot\cup}K_{1,b}).
\]
In particular, if $\sF\to_w \sT(C_4)$, then $\sF\to_w \sT(P_6)$.
\end{lemma}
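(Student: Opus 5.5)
The plan is to build the required map explicitly from a map $f:\sF\to_w\sT(K_{a,b})$ and an independent negative-edge cover $C\in\CC(\sF)$.

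First, record the structure of $\sT(K_{a,b})$. Write its vertex set as $A\sqcup B$ with $|A|=a$ and $|B|=b$. Its negative edges are exactly the pairs between $A$ and $B$, and all pairs inside $A$ or inside $B$ are positive. So a map $f$ with $\sF\to_w\sT(K_{a,b})$ is a proper colouring of $F$ with colour set $A\sqcup B$ in which every negative edge of $\sF$ has one endpoint coloured from $A$ and the other from $B$.

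For the target $\sT(K_{1,a}\mathbin{\dot\cup}K_{1,b})$, label the vertices as follows. One star has centre $p$ and leaves $p_\alpha$ for $\alpha\in A$. The other has centre $q$ and leaves $q_\beta$ for $\beta\in B$. The negative edges are exactly $pp_\alpha$ and $qq_\beta$, and every other pair is a positive edge. Fix $C\in\CC(\sF)$; this is where the hypothesis $\CC(\sF)\ne\varnothing$ is used. Define $g:V(F)\to V(K_{1,a}\mathbin{\dot\cup}K_{1,b})$ by
\[
g(v)=\begin{cases}
p, & v\in C,\ f(v)\in B,\\
q, & v\in C,\ f(v)\in A,\\
p_{f(v)}, & v\notin C,\ f(v)\in A,\\
q_{f(v)}, & v\notin C,\ f(v)\in B.
\end{cases}
\]
The idea is that $C$-vertices are sent to the centre of the star whose leaves are indexed by the opposite side.

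Next, verify the two conditions of Definition~\ref{def:containment}, without injectivity.

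Negative edges: take a negative edge $uv$ of $\sF$. Since $C$ is an independent cover, exactly one endpoint, say $u$, lies in $C$. Also $f(u)$ and $f(v)$ lie on opposite sides. If $f(u)\in B$, then $f(v)\in A$ and $v\notin C$, so $uv$ goes to $pp_{f(v)}$, which is negative. If $f(u)\in A$, it goes to $qq_{f(v)}$, which is also negative.

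Positive edges: it suffices that adjacent vertices receive distinct images, since every pair of distinct vertices in $\sT(\cdot)$ is an edge. Two vertices sent to $p$, or two sent to $q$, both lie in $C$ and are therefore nonadjacent. Two vertices sent to the same leaf share the same $f$-colour and are nonadjacent because $f$ is proper. This finishes the main claim.

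For the particular case, note that $C_4=K_{2,2}$. Applying the main claim with $a=b=2$ gives $\sF\to_w\sT(P_3\mathbin{\dot\cup}P_3)$. Now $P_3\mathbin{\dot\cup}P_3$ is a spanning subgraph of $P_6$, so the inclusion observation from the preliminaries gives $\sT(P_3\mathbin{\dot\cup}P_3)\to_w\sT(P_6)$: negative edges stay negative and positive edges may become negative. Composing the two maps gives $\sF\to_w\sT(P_6)$.

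The only real point is choosing $g$ so that the centres absorb exactly the cover vertices; the rest is a routine check.
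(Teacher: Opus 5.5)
Your proof is correct and takes essentially the same approach as the paper's: cover vertices go to the centre of the star whose leaves are indexed by the opposite class of $K_{a,b}$, the remaining vertices go to the leaf labelled by their $f$-image, and the $C_4$ case follows by regarding $2P_3$ as a spanning subgraph of $P_6$. The only difference is presentational: you use the subgraph-inclusion observation and compose the maps, whereas the paper notes directly that the same assignment still works after one positive edge is turned negative.
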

\begin{proof}
Let \(X\) and \(Y\) be the two vertex classes of \(K_{a,b}\), with \(|X|=a\) and \(|Y|=b\). Let \(f\) be the given map, and choose \(C\in\mathcal C(\dot F)\).
Construct an ordinary graph \(S\) consisting of two disjoint stars, one with center \(c_X\) and leaves labelled by the vertices of \(Y\), and the other with center \(c_Y\) and leaves labelled by the vertices of \(X\). Thus $S\cong K_{1,a}\mathbin{\dot\cup}K_{1,b}$. For each \(v\notin C\), assign \(v\) to the leaf labelled \(f(v)\). For each \(v\in C\), assign \(v\) to \(c_X\) if \(f(v)\in X\), and to \(c_Y\) if \(f(v)\in Y\). We verify that this assignment satisfies the edge conditions in \(\dot T(S)\). Since \(C\) is independent and meets every negative edge, each negative edge has exactly one endpoint in \(C\). Moreover, \(f\) assigns its endpoints to opposite vertex classes of \(K_{a,b}\). The new assignment therefore places these endpoints at a center and a leaf of the same star, which are joined by a negative edge in \(\dot T(S)\).
Now consider a positive edge \(uv\). If neither endpoint belongs to \(C\), then \(f(u)\ne f(v)\), so \(u\) and \(v\) are assigned to distinct leaves. If exactly one endpoint belongs to \(C\), they are assigned to a center and a leaf, respectively. Both endpoints cannot belong to \(C\), since \(C\) is independent. Thus the endpoints of every positive edge are assigned to distinct vertices. Since \(\dot T(S)\) is complete and either sign is permitted for a positive edge, the required condition holds. Hence $\dot F\to_w\dot T(S)$. Finally, \(C_4\cong K_{2,2}\). When \(a=b=2\), the graph \(S\) is \(2P_3\). Joining an endpoint of one path to an endpoint of the other gives \(P_6\). The corresponding change from \(\dot T(2P_3)\) to \(\dot T(P_6)\) turns one positive edge into a negative edge and leaves all other signs unchanged. The same assignment still satisfies the edge conditions. Therefore, if \(\dot F\to_w\dot T(C_4)\), then \(\dot F\to_w\dot T(P_6)\).
\end{proof}

\section{The complete three-chromatic classification}\label{sec:three}
In this section we prove Theorem~\ref{thm:three} and construct a signed graph for each of its six coefficients. When $\CC(\sF)\ne\varnothing$, the degree bound proved in Theorem~\ref{thm:structure} reduces the admissible graphs to unions of paths and cycles. The final example shows that the five-vertex attainment bound is sharp.

\begin{proof}[Proof of Theorem~\ref{thm:three}]
Part (i) follows from Theorem~\ref{thm:structure}(ii). In the other cases, $k(\sF)\le3$, so every admissible graph has maximum degree at most two. Its connected components are isolated vertices, paths, or cycles. Every cycle has spectral radius two; the degree bound gives $\lambda_1(S)\le2$ for all admissible $S$. This proves (ii).

For (iii), inadmissibility of $C_4$ and Lemma~\ref{lem:twostars} give $\sF\to_w \sT(P_6)$. For \(s=1,2\), every blow-up of \(\dot T(P_s)\) has a bipartite underlying graph and therefore cannot contain a copy of \(\dot F\), since \(\chi(F)=3\). Thus \(P_1\) and \(P_2\) are \(\dot F\)-admissible, so \(t\ge3\). Together with the preceding bound \(t\le6\), this gives $3\le t\le6$. To see that \(C_3\) is not \(\dot F\)-admissible, take a proper three-coloring of \(F\), with color classes \(A_1,A_2,A_3\). The signed graph \(\dot T(C_3)\) is an all-negative triangle. A sufficiently large blow-up of this triangle consists of three independent sets, with all edges between different sets negative. Place the vertices of \(A_i\) at distinct vertices in the \(i\)-th set. Every edge of \(F\) joins different color classes, so every negative edge is represented by a negative edge, and every positive edge is also represented by a negative edge, as permitted by our containment definition. This produces a copy of \(\dot F\) in the blow-up. Hence \(C_3\) is not \(\dot F\)-admissible. The cycles $C_4,C_5$ are excluded by assumption, and every longer cycle contains $P_6$ as an ordinary subgraph, so is also inadmissible.

Consequently every connected component of an admissible graph is a path on at most $t-1$ vertices. Conversely, $P_{t-1}$ is admissible by minimality of $t$. The eigenvalues of $P_s$ are $2\cos(j\pi/(s+1))$, $1\le j\le s$. Thus $L(\sF)=\lambda_1(P_{t-1})=2\cos(\pi/t)$. The attaining graph has at most five vertices in (iii), and $C_4$ or $C_5$ provides such a graph in (ii). The examples below establish occurrence of every $t$ and sharpness.
\end{proof}

\subsection{The rational values}\label{subsec:rational}
The all-negative triangle has no independent negative-edge cover, and therefore has coefficient one. For the all-positive triangle, deleting any vertex leaves a positive edge, so \(k(\dot F)=2\). The bounds in Theorem~\ref{thm:structure}(iii) then give \(\theta(\dot F)=2\).

For coefficient $3/2$, let $\sF_5$ have vertices $b_1,b_2,b_3,a_1,a_2$, positive edges $b_1b_2,\ b_2b_3,\ b_1b_3$ and negative edges $a_1b_1,\ a_1b_2,\ a_2b_2,\ a_2b_3$. There are no other edges. Its underlying graph is three-chromatic, and $\{a_1,a_2\}$ is an independent negative-edge cover. The negative edges form a connected spanning path with bipartition $\{a_1,a_2\}$ and $\{b_1,b_2,b_3\}$. In a negative-edge-preserving map into $C_4$, the three $b_i$ must all lie in the same bipartition class of $C_4$. Their positive triangle requires three distinct images, while that class has only two vertices. Thus $C_4$ is admissible and Theorem~\ref{thm:three} gives $\tht(\sF_5)=3/2$.

\subsection{The value $1+1/\sqrt2$}\label{subsec:F6}
Let $\sF_6$ have vertices $a,b,c,x,y,z$ and edge sets $E^-(\sF_6)=\{ax,by,cz\}$, $E^+(\sF_6)=\{ab,bc,ca,ay,bz,cx\}$. All unlisted pairs are nonedges. The positive triangle gives $\chi(F_6)\ge3$. A proper three-coloring extends the colors of $a,b,c$ by assigning to $x,y,z$ the colors of $b,c,a$, respectively, so equality holds.

Its only independent negative-edge cover is $\{x,y,z\}$. For instance, if such a cover contained $a$, then it would exclude $b$ and hence contain $y$ because $by$ is negative, contradicting the positive edge $ay$. The cases $b,c$ are cyclically identical. By~\eqref{eq:star-test}, we have \(\dot F_6\not\to_w\dot T(P_3)\), since deleting the unique independent negative-edge cover leaves a positive triangle, whose three vertices must be assigned to distinct leaves, whereas \(P_3\) has only two leaves.

With the vertices of $P_4$ labeled consecutively $1,2,3,4$, the map
\[
 (a,b,c,x,y,z)\longmapsto(1,2,3,2,3,4)
\]
realizes $\sF_6\to_w\sT(P_4)$. Since $P_4$ is an ordinary subgraph of both $C_4$ and $C_5$, those cycles are inadmissible. Thus $t=4$, and
\[
 \begin{aligned}
 L(\sF_6)&=\sqrt2,\\
 \tht(\sF_6)&=1+\frac1{\sqrt2}.
 \end{aligned}
\]
The admissible graph $P_3$ attains the spectral supremum. A Perron vector whose entries sum to one is
\[
 \left(1-\frac1{\sqrt2},\ \sqrt2-1,\ 1-\frac1{\sqrt2}\right).
\]
Replace each vertex of $\sT(P_3)$ by an independent set in the corresponding proportion. The resulting signed graphs have weighted density tending to $1$ at $\rho=\tht(\sF_6)$.

\subsection{The golden ratio}\label{subsec:F8}
Let $\sF_8$ have vertices $a,b,c,u,p,q,r,s$ and
\[
 \begin{aligned}
 E^-(\sF_8)&=\{au,bu,ap,bq,cr,cs\},\\
 E^+(\sF_8)&=\{ab,bc,ca,cp,cq,ar,bs\}.
 \end{aligned}
\]
The five vertices outside $\{a,b,c\}$ are independent, and each has neighbors in at most two vertices of that triangle. Hence $\chi(F_8)=3$, and those five vertices form an independent negative-edge cover.

We first show that $\sF_8\not\to_w\sT(P_4)$. Indeed, \(a\) and \(b\) must be assigned distinct vertices of \(P_4\), both adjacent to the vertex assigned to \(u\). By reversing the vertex order of \(P_4\) and, if necessary, interchanging \((a,p,r)\) with \((b,q,s)\), we may assume that \(a\) and \(b\) receive labels \(1\) and \(3\), respectively. Since \(c\) is joined to both \(a\) and \(b\) by positive edges, its label must be \(2\) or \(4\). If \(c\) receives label \(2\), the negative edge \(ap\) forces \(p\) to receive label \(2\) as well, contradicting the requirement that the endpoints of the positive edge \(cp\) have distinct labels. If \(c\) receives label \(4\), the negative edge \(cs\) forces \(s\) to receive label \(3\), the same as \(b\), contradicting the positive edge \(bs\).

The following assignments, in the displayed vertex order, satisfy the required edge conditions:
\[
\begin{array}{c|cccccccc}
 &a&b&c&u&p&q&r&s\\ \hline
 \sT(P_5)&2&4&3&3&1&5&4&2\\
 \sT(C_4)&1&3&2&2&4&4&3&1
\end{array}
\]
Here the cycle labels are cyclic. The first assignment also satisfies the edge conditions in $\sT(C_5)$. Thus $t=5$, and, writing $\varphi=(1+\sqrt5)/2$,
\[
 \begin{aligned}
 L(\sF_8)&=\varphi,\\
 \tht(\sF_8)&=1+\varphi^{-1}=\varphi.
 \end{aligned}
\]
The admissible graph $P_4$ attains the spectral supremum, with normalized Perron vector $(1,\varphi,\varphi,1)/(2+2\varphi)$.

\subsection{The endpoint $t=6$}\label{subsec:F9}
Define $\sF_9$ on $A=\{a_0,a_2,a_3,a_4\}$ and $B=\{b_0,b_1,b_2,b_3,b_4\}$. Its negative edges are
\[
 a_0b_0,\ a_2b_0,\ a_2b_2,\ a_3b_1,\ a_3b_3,\ a_4b_2,\ a_4b_4.
\]
Its positive edges are all pairs $b_i b_j$ with $1\le j-i\le2$, together with
\[
 a_0b_1,\ a_2b_3,\ a_3b_0,\ a_3b_4,\ a_4b_1.
\]
All other pairs are nonedges. In particular, $A$ is an independent negative-edge cover.

Label $P_6$ by $0,1,\ldots,5$. Assign label \(i\) to \(a_i\) and label \(i+1\) to \(b_i\). The endpoints of every negative edge then receive consecutive labels, while those of every positive edge receive labels differing by one or two. Thus $\sF_9\to_w\sT(P_6)$. Reducing these labels modulo any $\ell\ge3$ gives $\sF_9\to_w\sT(C_\ell)$. In particular, \(F_9\) is three-colorable, and the triangle \(b_0b_1b_2\) ensures that \(\chi(F_9)=3\).

We prove that $P_5$ is admissible. Suppose $\sF_9\to_w\sT(P_5)$, and write $x_i$ for the label assigned to $b_i$ in the consecutively labeled path $1,\ldots,5$. The negative common neighbors $a_2,a_3,a_4$, together with the corresponding positive edges within $B$, imply
\begin{equation}\tag{2}\label{eq:F9dist}
 |x_0-x_2|=|x_1-x_3|=|x_2-x_4|=2.
\end{equation}
Thus $x_0,x_2,x_4$ have one parity and $x_1,x_3$ have one parity.

If all five images have the same parity, the triangle $b_0b_1b_2$ requires three distinct labels, so they are $1,3,5$. The overlapping triangles within $B$ force $(x_0,x_1,x_2,x_3,x_4)=(u,v,w,u,v)$. Equation~\eqref{eq:F9dist} would make all three pairwise distances among $u,v,w$ equal to two, which is impossible.

If $x_0,x_2,x_4$ are even and $x_1,x_3$ are odd, then $a_2,a_4$ both have image $3$. The positive edges $a_2b_3$ and $a_4b_1$ exclude $3$ as an image of $b_3,b_1$. Their images are therefore $1$ and $5$, contradicting~\eqref{eq:F9dist}.

In the remaining case $x_1,x_3$ are even. Then $a_3$ has image $3$, and the positive edges $a_3b_0,a_3b_4$ force $x_0,x_4\in\{1,5\}$, so $x_2=3$. Reflect the path to assume $x_0=1$. The negative edge $a_0b_0$ forces label $2$ on $a_0$, and the positive edge $a_0b_1$ gives $x_1=4$, hence $x_3=2$. But $a_2$, a common negative neighbor of images $1$ and $3$, also has image $2$, contradicting the positive edge $a_2b_3$. This proves admissibility of $P_5$.

We have therefore obtained $t=6$, $L(\sF_9)=\sqrt3$, and $\tht(\sF_9)=1+\frac1{\sqrt3}$. The normalized Perron vector of its attaining graph $P_5$ is
\[
 \frac{(1,\sqrt3,2,\sqrt3,1)}{4+2\sqrt3}.
\]
Every admissible component for this example is a path. An admissible graph on at most four vertices then has spectral radius at most $\lambda_1(P_4)=\varphi<\sqrt3$, proving the sharpness of the five-vertex attainment bound.

The examples show why ordinary chromatic number alone cannot determine the signed coefficient. The positive triangle and $\sF_6,\sF_8,\sF_9$ all have underlying chromatic number three, yet give four distinct coefficients. They also illustrate failure of switching invariance: the all-negative triangle has coefficient one, whereas a triangle with exactly one negative edge has an independent cover consisting of an endpoint of that edge and has coefficient two. These two signatures are switching equivalent.

\section{Explicit bounds on admissible components}\label{sec:componentbounds}

Every finite value of $L(\sF)$ is attained, by the mixed-graph theory in the nontrivial case. Indeed, if $\sF$ has a negative edge and $0<L(\sF)<\infty$, Theorem~\ref{thm:spectral} gives $1<\tht(\sF)<\infty$. Let $\mathcal O(\sF)$ comprise all orientations of its negative edges, with its positive edges left undirected. By \cite[Definition~1.3]{MY}, any orientation of the negative edges of a signed host is $\mathcal O(\sF)$-free exactly when that host is $\sF$-free. Thus $\tht(\mathcal O(\sF))=\tht(\sF)$.

The finite-family version of \cite[Theorem~4.13 and Section~5.3]{MY} gives an attaining finite mixed adjacency matrix $(U,D)$ whose balanced blow-ups are $\mathcal O(\sF)$-free. By \cite[Definitions~4.2 and~4.9; proof of Lemma~5.2]{MY}, it has complete underlying graph, $U_{ii}=0$, and $D\ne0$, with directed entries normalized to $2$. Let $A=(D+D^{\mathsf T})/2$ be the adjacency matrix of the ordinary graph $S_*$ recording its directed pairs. Then \(U=J-I-A\). Moreover, \(S_*\) is \(\dot F\)-admissible, since orienting the negative edges of any blow-up of \(\dot T(S_*)\) according to \(D\) gives a mixed graph contained in a sufficiently large balanced blow-up of \((U,D)\). On the simplex,
\[
 \frac{1-x^{\mathsf T}Ux}{x^{\mathsf T}Dx}
 =1+\frac{\|x\|_2^2}{x^{\mathsf T}Ax},
\]
with value $\infty$ when the denominator vanishes. Its minimum is $\tht(\sF)$ by the cited theorem, and equals $1+1/\lambda_1(S_*)$ by the Rayleigh principle and a nonnegative Perron eigenvector. The spectral formula yields $L(\sF)=\lambda_1(S_*)$. If \(\dot F\) is all-positive and \(r=\chi(F)\), then \(S\) is \(\dot F\)-admissible if and only if \(|V(S)|<r\). Hence \(L(\dot F)=r-2\), with equality attained by \(K_{r-1}\). When \(L(\dot F)=0\), the supremum is attained by \(K_1\). When \(L(\dot F)=\infty\), no finite graph attains it.

For integers $D\ge1$ and $s\ge3$, define
\begin{equation}\tag{3}\label{eq:treebound}
 B(D,s)=
 \begin{cases}
  1+D\displaystyle\sum_{j=0}^{(s-4)/2}(D-1)^j,&s\text{ even},\\[4pt]
  2\displaystyle\sum_{j=0}^{(s-3)/2}(D-1)^j,&s\text{ odd}.
 \end{cases}
\end{equation}
Here $0^0=1$, so $B(1,s)=2$ and $B(2,s)=s-1$. For $D\ge2$, $B(D,s)$ is the maximum order of a tree with maximum degree at most $D$ and diameter at most $s-2$~\cite[Theorem~3.1]{CIM}. Unlike general attainment, the following proposition bounds every admissible connected component and gives an explicit finite search.

\begin{proposition}\label{prop:componentbound}
Suppose $\chi(F)\ge3$, $\CC(\sF)\ne\varnothing$, and $\sF\to_w \sT(P_t)$ for some $t$. Let $m=|V(F)|$, $D=k(\sF)-1$, and let $\ell=\min\{t:\sF\to_w \sT(P_t)\}$. Then $3\le\ell\le m$, and every connected component of every $\sF$-admissible graph has at most $B(D,\ell)$ vertices. Consequently, some connected $\sF$-admissible graph $S_*$ satisfies
\[
 \begin{gathered}
 \lambda_1(S_*)=L(\sF),\\
 |V(S_*)|\le B(D,\ell)\le B(D,m).
 \end{gathered}
\]
\end{proposition}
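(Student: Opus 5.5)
The plan has three parts: the range of $\ell$, a bound on the order of each admissible component, and attainment by a finite search.

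\emph{Range of $\ell$.} For $\ell\ge3$, I would argue exactly as in the proof of Theorem~\ref{thm:three}. Every blow-up of $\sT(P_1)$ or $\sT(P_2)$ has a bipartite underlying graph, while $\chi(F)\ge3$. Hence $\sF\not\to_w\sT(P_s)$ for $s\le2$.

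For $\ell\le m$ I would use a compression argument, which I expect to be the only step needing care. Take a map $f:V(F)\to V(P_t)=\{1,\dots,t\}$ satisfying the edge conditions, and let $I=f(V(F))$, so $|I|\le m$. Let $\pi:I\to\{1,\dots,|I|\}$ be the order-preserving bijection, and consider $\pi\circ f$.
\begin{itemize}
\item A negative edge of $\sF$ goes to labels $i,i+1$ that both lie in $I$. These are consecutive elements of $I$, so their $\pi$-images are again consecutive and the edge lands on a negative edge of $\sT(P_{|I|})$.
\item A positive edge only needs distinct labels, and $\pi$ is injective.
\end{itemize}
Thus $\sF\to_w\sT(P_{|I|})$. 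Since $P_{|I|}$ is an ordinary subgraph of $P_m$, the inheritance remark ($\sT(S')\to_w\sT(S)$) gives $\sF\to_w\sT(P_m)$, so $\ell\le m$.

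\emph{Component bound.} Let $H$ be a connected component of an admissible $S$. By inheritance $H$ is admissible.
\begin{itemize}
\item By the proof of Theorem~\ref{thm:structure}(ii), $K_{1,k(\sF)}$ is inadmissible, so $\Delta(H)\le k(\sF)-1=D$. Also $D\ge1$, because $k(\sF)\ge r-1\ge2$.
\item $H$ contains no $P_\ell$ as an ordinary subgraph, since otherwise $H$ would be inadmissible by inheritance.
\end{itemize}
Choose a spanning tree $T$ of $H$. Then $\Delta(T)\le D$, and every path in $T$ has at most $\ell-1$ vertices, so $\operatorname{diam}(T)\le\ell-2$.
\begin{itemize}
\item For $D\ge2$, the cited result~\cite[Theorem~3.1]{CIM} gives $|V(H)|=|V(T)|\le B(D,\ell)$.
\item For $D=1$, $H$ is $K_1$ or $K_2$, so $|V(H)|\le2=B(1,\ell)$.
\end{itemize}
The inequality $B(D,\ell)\le B(D,m)$ follows from monotonicity of $B(D,s)$ in $s$. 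For $D\ge2$ this is clear from the extremal-tree interpretation, since a larger diameter allowance can only increase the maximum order. For $D=1$ both sides equal $2$. A direct check of \eqref{eq:treebound} also works, for example $B(D,5)=2D\ge1+D=B(D,4)$.

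\emph{Attainment.} The spectral radius of a graph is the maximum over its components, so $L(\sF)$ is the supremum of $\lambda_1(H)$ over connected admissible $H$. By the component bound, there are only finitely many such $H$ up to isomorphism, since each has at most $B(D,\ell)$ vertices. The supremum is therefore a maximum, attained by some connected admissible $S_*$ with $|V(S_*)|\le B(D,\ell)\le B(D,m)$.
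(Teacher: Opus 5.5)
Your proposal is correct and follows the paper's proof: your order-preserving relabelling is an explicit form of the paper's joining of the used path segments end to end. The degree bound from \eqref{eq:star-test}, the exclusion of $P_\ell$, the spanning-tree diameter bound and the finite search also match. The only difference is that you invoke \cite[Theorem~3.1]{CIM} (and handle $D=1$ by hand) for the tree order bound, whereas the paper counts vertices directly around a central vertex or a central edge.
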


\begin{proof}
The chromatic number assumption gives $\ell\ge3$. Take a map realizing $\sF\to_w\sT(P_t)$. Its used vertices induce disjoint paths and isolated vertices in the ordinary graph $P_t$, on at most $m$ vertices. Joining these path components end to end preserves their edges without merging any vertices. The resulting path has at most \(m\) vertices, so the same vertex assignment satisfies the edge conditions in \(\dot T(P_m)\). Hence $\ell\le m$.

By \eqref{eq:star-test}, every admissible graph has maximum degree at most \(D\). Moreover, no admissible graph contains \(P_\ell\) as a subgraph. For a connected component $Q$, choose a spanning tree $U$. Every path of $U$ is a path of $Q$, so $\operatorname{diam}(U)\le\ell-2$. If \(\ell-2=2a\), there exists a vertex \(v\) such that every vertex is at distance at most \(a\) from \(v\). Counting the vertices at distances \(0,1,\ldots,a\) from \(v\) gives
\[
 |V(Q)|\le 1+D\sum_{j=0}^{a-1}(D-1)^j.
\]
If \(\ell-2=2a+1\) and \(U\) has at least two vertices, deleting a suitable edge leaves two trees, each rooted at an endpoint of the deleted edge and having depth at most \(a\). When \(U\) has even diameter at most \(2a\), any edge incident to a central vertex has this property. Counting the vertices at each distance from the two roots gives
\[
 |V(Q)|\le 2\sum_{j=0}^{a}(D-1)^j.
\]
The bound also holds when \(U\) has only one vertex. These are the two cases of \eqref{eq:treebound}.

The spectral radius of a disjoint union is the maximum of those of its components. Admissibility passes to components, and only finitely many connected graphs have order at most $B(D,\ell)$. Therefore, one of these connected admissible graphs, say \(S_*\), satisfies $\lambda_1(S_*)=L(\dot F)$.
Since \(\ell\le m\) and \(B(D,s)\) is nondecreasing in \(s\) for \(D\ge1\), we also have $|V(S_*)|\le B(D,\ell)\le B(D,m)$. This establishes both the attainment of \(L(\dot F)\) and the stated order bound, completing the proof.
\end{proof}

\begin{corollary}\label{cor:algebraic}
If $0<L(\sF)<\infty$, then $L(\sF)$ is a totally real algebraic integer and $\tht(\sF)$ is a totally real algebraic number. If an attaining graph has order $q$, their common algebraic degree is at most $q$; under Proposition~\ref{prop:componentbound}, it is at most $B(k(\sF)-1,\ell)$. Moreover, if $1<\tht(\sF)<\infty$ is rational and $r=\chi(F)$, then
\[
 \tht(\sF)\in\left\{\frac{r}{r-1},\frac{r-1}{r-2}\right\}.
\]
\end{corollary}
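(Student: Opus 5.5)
The plan is to read all three assertions off an attaining admissible graph together with the bounds of Theorem~\ref{thm:structure}. First, attainment. The discussion at the start of Section~\ref{sec:componentbounds} shows that every finite value $0<L(\sF)<\infty$ is attained by some $S_*\in\mathcal A(\sF)$. In the nontrivial case this comes from the mixed-graph theory; in the all-positive case it comes from $K_{r-1}$. Under the hypotheses of Proposition~\ref{prop:componentbound} we may also take $S_*$ connected with $|V(S_*)|\le B(k(\sF)-1,\ell)$. I expect this step to be the only real obstacle, since everything afterwards is algebra. I take it as given from those earlier results.

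Next, algebraicity. Let $q=|V(S_*)|$. Then $L(\sF)=\lambda_1(S_*)$ is a root of the characteristic polynomial $\det(xI-A(S_*))$, which is monic with integer coefficients and of degree $q$. So $L(\sF)$ is an algebraic integer of degree at most $q$. Its minimal polynomial divides this characteristic polynomial. Since $A(S_*)$ is real symmetric, all its roots are real, so every conjugate of $L(\sF)$ is real, and $L(\sF)$ is totally real.

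For $\tht(\sF)=1+1/L(\sF)$ (Theorem~\ref{thm:spectral}), note that $\QQ(\tht(\sF))=\QQ(L(\sF))$, so the two numbers have the same degree. The conjugates of $\tht(\sF)$ are $1+1/L'$, where $L'$ runs over the conjugates of $L(\sF)$. Each such $L'$ is nonzero because the minimal polynomial of $L(\sF)\ne0$ is irreducible, so $x$ does not divide it. Each $L'$ is also real, so $\tht(\sF)$ is totally real. The degree bound $B(k(\sF)-1,\ell)$ then follows by substituting the order bound of Proposition~\ref{prop:componentbound} for $q$.

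Finally, the rational values. Suppose $1<\tht(\sF)<\infty$ is rational. Theorem~\ref{thm:structure}(i) gives $r\ge3$, and Theorem~\ref{thm:structure}(ii) gives $\CC(\sF)\ne\varnothing$. Since $0<L(\sF)<\infty$, attainment applies, so $L(\sF)=1/(\tht(\sF)-1)$ is a rational algebraic integer, hence an integer. Theorem~\ref{thm:structure}(iii) gives $r-2\le L(\sF)\le k(\sF)-1\le r-1$, so $L(\sF)\in\{r-2,r-1\}$. Therefore $\tht(\sF)\in\{(r-1)/(r-2),\,r/(r-1)\}$, with $r-2\ge1$ ensuring both values are finite.
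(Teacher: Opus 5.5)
Your proposal is correct and follows the paper's proof essentially step for step. Both arguments use the same ingredients: an attaining $S_*$ from Section~\ref{sec:componentbounds}, the fact that the minimal polynomial of $\lambda_1(S_*)$ divides the characteristic polynomial of the symmetric integer matrix $A(S_*)$, the identity $\QQ(\tht(\sF))=\QQ(L(\sF))$, and integrality of a rational algebraic integer combined with $r-2\le L(\sF)\le r-1$. Your extra checks only make explicit what the paper leaves implicit: that the conjugates are nonzero, and that $r\ge3$ and $\CC(\sF)\ne\varnothing$ follow from Theorem~\ref{thm:structure}(i),(ii) before part (iii) is invoked.
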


\begin{proof}
The minimal polynomial of $L(\sF)=\lambda_1(S_*)$ divides the characteristic polynomial of the symmetric integer matrix $A(S_*)$. Thus $L(\sF)$ is an algebraic integer with only real conjugates. Its conjugates are nonzero, and
\[
 \begin{gathered}
 \QQ(\tht(\sF))=\QQ(L(\sF)),\\
 \tht(\sF)=1+1/L(\sF),
 \end{gathered}
\]
so the assertions about reality and degree follow. If $\tht(\sF)$ is rational, then $L(\sF)$ is a rational algebraic integer, hence an integer. The bounds $r-2\le L(\sF)\le r-1$ leave precisely the two stated possibilities.
\end{proof}

\section*{Declaration on the use of generative AI}
AI tools were used to assist in checking mathematical results and simplifying proofs. The authors take full responsibility for the content and mathematical correctness of the manuscript.

\end{document}